\theoremstyle{plain}                     
\newtheorem{teo}{Theorem}[section]       
\newtheorem{prop}[teo]{Proposition}     
\newtheorem{lem}[teo]{Lemma}      
\theoremstyle{definition}                
\newtheorem{defin}[teo]{Definition}
\newtheorem{oss}[teo]{Remark}
\numberwithin{equation}{section}
\newcommand{\myintol}[1]{\int_0^L{#1} \,dx_1}
\newcommand{\myintom}[1]{\int_{\Omega}{#1 \,dx}}
\newcommand{\myintegra}[1]{\int_S{#1 \,dx_2dx_3}}
\newcommand{\deb}[0]{\rightharpoonup}
\newcommand{\til}[1]{\widetilde{#1}}
\newcommand{\lin}[1]{\overline{#1}}
\newcommand{\capp}[1]{\widehat{#1}}
\newcommand{\sym}{\mathrm{sym}}
\renewcommand{\div}{\mathrm{div}}
\newcommand{\dist}{\mathrm{dist}}
\newcommand{\comp}{{\,\circ\,}}
\newcommand{\R}{\mathbb R}
\newcommand{\mthree}{{\mathbb M}^{3{\times}3}}
\newcommand{\nablah}{\nabla_{\!h}}
\newcommand{\wto}{\rightharpoonup}
\title[Convergence of equilibria of thin elastic rods]{Convergence of equilibria of thin elastic rods under physical
growth conditions\\ 
for the energy density} 
\author[E. Davoli]{Elisa Davoli} 
\author[M.G. Mora]{Maria Giovanna Mora}
\address[E. Davoli]{Scuola Internazionale Superiore di Studi Avanzati, via Bonomea 265, 34136 Trieste (Italy)}
\email{davoli@sissa.it}
\address[M.G. Mora]{Scuola Internazionale Superiore di Studi Avanzati, via Bonomea 265, 34136 Trieste (Italy)}
\email{mora@sissa.it}
\subjclass[2000]{74K10, 74B20, 74G10}
\keywords{Nonlinear elasticity, rod theories, equilibrium configurations, 
stationary points}
\begin{document}
 
\begin{abstract}
The subject of this paper is the study of the asymptotic behaviour of the equilibrium configurations of a nonlinearly elastic thin rod, as the diameter of the
cross-section tends to zero. Convergence results are established assuming physical growth conditions for the elastic energy density and suitable scalings of the applied loads, that correspond at the limit to different rod models: the constrained linear theory, the analogous of von K\'arm\'an plate theory for rods, 
and the linear theory. 
\end{abstract} 
 
\maketitle

\section{Introduction and Statement of the Main Result}

A classical question in nonlinear elasticity is the derivation 
of lower dimensional models for thin structures (such as plates, shells, or beams) 
starting from the three-dimensional theory. 
In recent years this problem has been approached by means of $\Gamma$-convergence. 
This method guarantees, roughly speaking, the convergence of minimizers 
of the three-dimensional energy to minimizers of the deduced models. 
In this paper we discuss the convergence of three-dimensional stationary points, 
which are not necessarily minimizers, assuming physical growth conditions on the stored-energy density. 
In particular, we extend the recent results of \cite{M-S} to the case of a three-dimensional thin beam 
with a cross-section of diameter $h$ and subject to an applied normal body force of order
$h^\alpha$, $\alpha>2$. These scalings correspond at the limit to the constrained linear rod theory
($2<\alpha<3$), the analogous of von K\'arm\'an plate theory for rods ($\alpha=3$), 
and the linear rod theory ($\alpha>3$).

\

We first review the main results of the variational approach.
Let $\Omega_h=(0,L){\times}hS$ be the reference configuration of a thin elastic 
beam, where $L>0$, $S\subset\R^2$ is a bounded domain with Lipschitz boundary, 
and $h>0$ is a small parameter. Without loss of generality we shall assume that 
the two-dimensional Lebesgue measure of $S$ is equal to $1$ and
\begin{equation}\label{ipsimm}
\int_Sx_2\,dx_2dx_3=\int_Sx_3\,dx_2dx_3= \int_S x_2x_3\,dx_2dx_3=0.
\end{equation}
Let $f^h\in L^2(\Omega_h;\R^3)$ be an external body force applied to the beam.
Given a deformation $v\in W^{1,2}(\Omega_h,\R^3)$ the
total energy per unit cross-section associated to $v$ 
is defined as
$$
\cal F^h(v)=\frac{1}{h^2}\int_{\Omega_h} W(\nabla v)\, dx - \frac{1}{h^2}\int_{\Omega_h} f^h{\,\cdot\,}v\, dx,
$$
where the stored-energy density $W:\mthree\to[0,+\infty]$ is assumed to satisfy
the following natural conditions:
\begin{itemize}
\medskip
\item[(H1)] $W$ is of class $C^1$ on $\mthree_+$;
\medskip
\item[(H2)] $W(F)=+\infty$ if $\det F\leq 0$, \ $W(F)\to+\infty$ if $\det F\to 0^+$;
\medskip
\item[(H3)] $W(RF)=W(F)$ for every $R\in SO(3)$, $F\in\mthree$
(frame indifference);
\medskip
\item[(H4)] $W=0$ on $SO(3)$;
\medskip
\item[(H5)] $\exists C>0$ such that $W(F)\geq C\, \dist^2(F,SO(3))$ for every $F\in\mthree$;
\medskip
\item[(H6)] $W$ is of class $C^2$ in a neighbourhood of $SO(3)$.
\medskip
\end{itemize}
Here $\mthree_+=\{F\in\mthree: \det F>0\}$ and $SO(3)=\{R\in\mthree:R^TR=Id,\ \det R=1\}$. 
In particular, condition (H2) is related to non-interpenetration of matter 
and ensures local invertibility of $C^1$ deformations with finite energy. 

The study of the asymptotic behaviour of global minimizers of $\cal F^h$, as $h\to0$,
can be performed through the analysis of the $\Gamma$-limit of $\cal F^h$ (see
\cite{DM} for an introduction to $\Gamma$-convergence). To do this,
it is convenient to rescale $\Omega_h$ to the domain $\Omega=(0,L){\times}S$ 
and to rescale deformations according to this change of variables
by setting 
$$
y(x):=v(x_1, hx_2, hx_3)
$$
for every $x\in\Omega$. Assuming for simplicity that $f^h(x)=f^h(x_1)$,
the energy functional can be written as
$$
\cal F^h(v)=\cal J^h(y)=\int_\Omega W(\nablah y)\, dx - \int_\Omega f^h{\,\cdot\,}y\, dx,
$$
where we have used the notation
$$
\nablah y:=\Big(\partial_1 y\,\Big|\,\frac{\partial_2 y}{h}\,\Big|\,\frac{\partial_3
y}{h}\Big).
$$
Let now $y^h$ be a global minimizer of $\cal J^h$ subject to the
boundary condition
\begin{equation}\label{bdary}
y^h(0,x_2,x_3)=(0,hx_2,hx_3) \quad \text{for every }(x_2,x_3)\in S.
\end{equation}
The asymptotic behaviour of $y^h$, as $h\to0$, depends on the scaling of the
applied load $f^h$ in terms of $h$. More precisely, if $f^h$ is of order $h^\alpha$
with $\alpha\geq0$, then $\cal J^h(y^h)=O(h^\beta)$, where $\beta=\alpha$
for $0\leq\alpha\leq2$ and $\beta=2\alpha-2$ for $\alpha>2$, and $y^h$ converges
in a suitable sense to a minimizer of the $\Gamma$-limit of the rescaled functionals
$h^{-\beta}\cal J^h$, as $h\to0$ (see \cite{F-J-M2,A-B-P,M-M3,M-M,S0,S}).
In particular, it has been proved in \cite{M-M,S} that, if $f^h$ is a normal
force of the form $h^\alpha(f_2 e_2 +f_3 e_3)$, with $\alpha>2$ and
$f_2,f_3\in L^2(0,L)$, then 
$$
y^h\to x_1e_1\quad \text{in } W^{1,2}(\Omega,\R^3).
$$
In other words, minimizers converge to the identity deformation on the mid-fiber of the rod.
This suggests to introduce the (averaged) tangential and normal displacements,
respectively given by 
\begin{eqnarray}
\label{defuh}
& &
u^h(x_1):=\begin{cases}
\displaystyle
\frac{1}{h^{\alpha-1}}
\myintegra{(y^h_1-x_1)} & \ \text{if }
\alpha\geq3,
\medskip
\\ 
\displaystyle
\frac{1}{h^{2(\alpha-2)}}
\myintegra{(y^h_1-x_1)} & \ \text{if }
2<\alpha<3,
\end{cases} 
\\
\nonumber\\
\label{defvhk}
& &
v^h_k(x_1):=\frac{1}{h^{\alpha-2}}\myintegra{y^h_k} \quad \text{for } k=2,3
\end{eqnarray}
for a.e.\ $x_1\in(0,L)$,
and the (averaged) twist function, given by 
\begin{equation}\label{defwh}
w^h(x_1):=\frac{1}{\mu(S)}\frac{1}{h^{
\alpha-1}}\myintegra{(x_2 y^h_3-x_3 y^h_2)}
\end{equation}
for a.e.\ $x_1\in(0,L)$, where $\mu(S):=\myintegra{(x_2^2+x_3^2)}$.
As $h\to0$, the sequence $(u^h, v^h_2, v^h_3, w^h)$ converges
strongly in $W^{1,2}$ to a limit $(u,v_2,v_3,w)$, which is a global minimizer of
the functional $\cal J_\alpha$ given by the $\Gamma$-limit of
$h^{-2\alpha+2}\cal J^h$.
If $\alpha=3$, the $\Gamma$-limit $\cal J_3$ corresponds to the
one-dimensional analogous of the von K\'arm\'an plate functional. 
For $\alpha>3$ the functional $\cal J_\alpha$ coincides with the linear
rod functional, while for $2<\alpha<3$ the limiting energy is still linear but is
subject to a nonlinear isometric constraint (see Section~\ref{sec:pr} for the exact
definition of the functionals $\cal J_\alpha$).

\

In this paper we focus on the study of the asymptotic behaviour of (possibly non minimizing)
stationary points of $\cal J^h$, as $h\to0$. The first convergence results 
for stationary points have been proved in \cite{M-M2,M-M-S,M-P}. 
We also point out the recent results \cite{AMM1,AMM2} concerning the dynamical case.
A crucial assumption in all these papers is that the stored-energy function $W$
is everywhere differentiable and its derivative satisfies a linear growth condition.
Unfortunately, this requirement is incompatible with the physical assumption
(H2). At the same time, if (H2) is satisfied, the conventional form of the
Euler-Lagrange equations of $\cal J^h$ is not well defined and it is
not even clear to which extent minimizers of $\cal J^h$ satisfy this condition
(we refer to \cite{B} and \cite{M-S} for a more detailed discussion). 

Following \cite{M-S}, we consider an alternative first order stationarity condition, 
introduced by Ball in \cite{B}. To this aim we require the following additional assumption:
\begin{itemize}
\smallskip
\item[(H7)] $\exists k>0$ such that $|DW(F)F^T|\leq k(W(F)+1)$ for every $F\in\mthree_+$.
\smallskip
\end{itemize}
This growth condition is compatible with (H1)--(H6) (see \cite{B}). 

\begin{defin}\label{pstazball}
We say that a deformation $y\in W^{1,2}(\Omega,\R^3)$ is a stationary
point of $\cal J^h$ if it satisfies the boundary condition 
$y(0,x_2,x_3)=(0,hx_2,hx_3)$ for every $(x_2,x_3)\in S$ and the equation
\begin{equation}\label{pstcresc}
\myintom{DW(\nablah y)(\nablah y)^T{\,:\,}
[(\nabla \phi)\comp y]}=\myintom{f^h{\,\cdot\,}(\phi\comp y)}
\end{equation}
for every $\phi\in C^1_b(\R^3,\R^3)$ such that
$\phi(0,hx_2,hx_3)=0$ for all $(x_2,x_3)\in S$.
\end{defin}

In the previous definition and in the sequel $C^1_b(\R^3,\R^3)$
denotes the space of $C^1$ functions that are bounded in $\R^3$, with
bounded first-order derivatives.

Assuming (H1)--(H7) and using external variations of the form $y+\epsilon\phi\comp y$,  
one can show that every local minimizer $y$ of $\cal J^h$, subject to the boundary condition
$y(0, x_2,x_3)=(0, hx_2,hx_3)$ for every $(x_2,x_3)\in S$, is a stationary
point of $\cal J^h$ in the sense of Definition~\ref{pstazball} (\cite[Theorem~2.4]{B}).
Moreover, when minimizers are invertible, condition \eqref{pstcresc} corresponds
to the equilibrium equation for the Cauchy stress tensor.

In \cite{M-S} it has been proved that
stationary points in the sense of Definition~\ref{pstazball} converge
to stationary points of the $\Gamma$-limit $\cal J_\alpha$ in the case of a thin plate 
and for the scaling $\alpha\geq 3$ (corresponding to von K\'arm\'an and to linear plate theory).
In this paper we extend this result to the range of scalings $\alpha>2$
in the case of a thin beam.  
Our main result is the following.

\begin{teo}\label{psstball}
Assume that $W$ satisfies (H1)--(H7).
Let $f_2$, $f_3\in L^2(0,L)$, and ${\alpha>2}$.
For every $h>0$ let $y^h$ be a stationary point
of $\cal J^h$ 
(according to Definition~\ref{pstazball}) with $f^h:=h^\alpha(f_2e_2+f_3e_3)$. 
Assume there exists $C>0$ such that 
\begin{equation}\label{energiapiccola2}
\myintom{W(\nablah y^h)}\leq Ch^{2\alpha-2}
\end{equation}
for every $h>0$. Then,
\begin{equation}\label{yhconv}
y^h\to x_1 e_1\quad \text{in } W^{1,2}(\Omega,\R^3).
\end{equation}
Moreover, let $u^h$, $v^h$, and $w^h$ be the scaled displacements and twist
function, introduced in (\ref{defuh})--(\ref{defwh}). 
Then, up to subsequences, we have
\begin{eqnarray*}
& u^h\wto u & \text{in }W^{1,2}(0,L),\\
& v^h_k\to v_k & \text{in }W^{1,2}(0,L) \quad \text{for }
k=2,3,\\ 
& w^h\wto w & \text{in }W^{1,2}(0,L),
\end{eqnarray*}
where $(u,v_2,v_3,w)\in W^{1,2}(0,L){\times}
W^{2,2}(0,L){\times}W^{2,2}(0,L){\times}W^{1,2}(0,L)$
is a stationary point of $\cal J_{\alpha}$.
\end{teo}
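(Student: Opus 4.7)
The plan is to adapt to the thin-rod setting the strategy developed by Mora and Scardia in~\cite{M-S} for plates, and to treat simultaneously the three scaling regimes $2<\alpha<3$, $\alpha=3$, and $\alpha>3$. As a first step, the energy bound~\eqref{energiapiccola2} combined with the geometric rigidity estimate (adapted to a thin beam as in~\cite{M-M,S}) produces an approximating rotation field $R^h\in W^{1,2}((0,L);SO(3))$ satisfying $\|\nablah y^h-R^h\|_{L^2(\Omega)}\le Ch^{\alpha-1}$ and $\|\partial_1R^h\|_{L^2(0,L)}\le Ch^{\alpha-2}$. This yields $R^h\to I$ and hence the convergence~\eqref{yhconv}, together with the weak and strong compactness of $u^h,v^h_k,w^h$ stated in the theorem. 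Up to a subsequence the scaled strain $G^h:=h^{-(\alpha-1)}((R^h)^T\nablah y^h-I)$ converges weakly in $L^2(\Omega;\mthree)$ to a limit $G$ whose symmetric part encodes $u'$, $v_2''$, $v_3''$, $w'$ on the axis together with a cross-sectional corrector $d(x_1,x_2,x_3)$.

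The core of the argument is the passage to the limit in~\eqref{pstcresc}. Given a smooth test quadruple $(\til u,\til v_2,\til v_3,\til w)$ of the regularity of the limit, I construct a family $\phi^h\in C^1_b(\R^3;\R^3)$ vanishing at the points $(0,hx_2,hx_3)$ and schematically of the form
$$
\phi^h(z)=h^{\alpha-2}\Phi_{\rm bend}(z)+h^{2(\alpha-2)}\Phi_{\rm stretch}(z_1)e_1+\text{higher-order correctors},
$$
where $\Phi_{\rm bend}$ is the infinitesimal rigid-motion field associated with $(\til v_2,\til v_3,\til w)$ and $\Phi_{\rm stretch}$ comes from $\til u$, both truncated so as to live in $C^1_b$. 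Using the strong convergence $y^h\to x_1 e_1$, the composition $(\nabla\phi^h)\comp y^h$ rescales to a target test strain compatible with the limit quadruple, and the loading term on the right-hand side of~\eqref{pstcresc} converges to $\myintol{(f_2\til v_2+f_3\til v_3)}$.

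The main obstacle is the passage to the limit in the stress term on the left-hand side of~\eqref{pstcresc}. Following~\cite{M-S}, the growth assumption (H7) together with~\eqref{energiapiccola2} provides an equi-integrability property for the scaled Cauchy stress $h^{-(\alpha-1)}DW(\nablah y^h)(\nablah y^h)^T$. One then splits the cylinder as $\Omega=\Omega^h_{\rm good}\cup\Omega^h_{\rm bad}$ via a truncation $\Omega^h_{\rm good}:=\{|G^h|\le h^{-\gamma}\}$ for a suitable $\gamma\in(0,1)$: on $\Omega^h_{\rm good}$, frame indifference (H3) and the $C^2$ regularity (H6) near $SO(3)$ justify the Taylor expansion
$$
\frac{1}{h^{\alpha-1}}DW(\nablah y^h)(\nablah y^h)^T=R^h\,\mathbb{L}\,\sym G^h\,(R^h)^T+o(1),\qquad \mathbb{L}:=D^2W(I),
$$
whereas on $\Omega^h_{\rm bad}$ one has $|\Omega^h_{\rm bad}|\to 0$ and the equi-integrability forces the contribution to vanish. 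The delicate point is to reconcile three requirements simultaneously: $C^1_b$ admissibility of $\phi^h$, the boundary condition, and the truncation scheme; this is exactly what must be verified to transfer the plate argument of~\cite{M-S} to the rod geometry.

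Once the identity produced by the two sides is available, the cross-sectional corrector $d$ is eliminated by pointwise minimization (as in the $\Gamma$-limit construction of~\cite{M-M,S}), and by the arbitrariness of $(\til u,\til v_2,\til v_3,\til w)$ one recovers precisely the Euler--Lagrange equation of $\cal J_\alpha$. The exponent $\alpha$ enters only in how $u$ couples to the bending of $v_2,v_3$: a nonlinear isometric constraint for $2<\alpha<3$, a quadratic von K\'arm\'an-type coupling for $\alpha=3$, and a linear decoupling for $\alpha>3$.
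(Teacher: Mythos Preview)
Your outline captures the overall architecture correctly---rigidity, compactness, good/bad set splitting via $B_h=\{|G^h|\le h^{-\gamma}\}$, and a Taylor expansion of the Cauchy stress on the good set. Where the proposal has a genuine gap is precisely in the construction of the test functions for the regime $2<\alpha<3$, which is the new case not covered by~\cite{M-S}.

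You plan to use $\phi^h$ built from an infinitesimal rigid-motion field $\Phi_{\rm bend}$ depending (necessarily linearly) on the cross-sectional variables $z_2,z_3$, rescaled by $1/h$. The difficulty is that in the stationarity condition~\eqref{pstcresc} the test function is composed with $y^h$, so what actually appears is $\Phi_{\rm bend}$ evaluated at $(y^h_1,\,y^h_2/h,\,y^h_3/h)$. For $\alpha>3$ one has $y^h_k/h\to x_k$ in $L^2$ and your scheme goes through. For $2<\alpha\le 3$, however,
\[
\frac{y^h_k}{h}-x_k \;=\; h^{\alpha-3}\,v^h_k \;+\; O(h^{\alpha-2}),
\]
so $y^h_k/h$ does \emph{not} converge to $x_k$ (it diverges when $\alpha<3$), and $(\nabla\phi^h)\comp y^h$ fails to converge to the intended target strain. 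Your phrase ``using the strong convergence $y^h\to x_1 e_1$'' is not enough here: that convergence gives control over the first slot only.

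The paper resolves this by constructing, for each $h$, a pair of one-dimensional functions $\xi^h_2,\xi^h_3\in C^1_b(\R)$ with $\xi^h_k(0)=0$, $\|\xi^h_k\|_{W^{1,\infty}}\le Ch^{\alpha-2}$, and such that
\[
z^h:=\Big(y^h_1,\ \frac{y^h_2}{h}-\frac{\xi^h_2(y^h_1)}{h},\ \frac{y^h_3}{h}-\frac{\xi^h_3(y^h_1)}{h}\Big)\longrightarrow x \quad\text{in }L^2(\Omega;\R^3).
\]
These $\xi^h_k$ are built iteratively from the tangential and normal displacement approximations extracted from $R^h$, and play the role of an approximate inverse of $y^h$ in the cross-sectional directions. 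All test functions with cross-sectional dependence are then written as functions of $\big(x_1,\,x_2/h-\xi^h_2(x_1)/h,\,x_3/h-\xi^h_3(x_1)/h\big)$, so that after composition with $y^h$ one evaluates at $z^h\to x$. The price is that the $\xi^h_k$ generate extra terms in $\nabla\phi^h$ (carrying a factor $(\xi^h_k)'/h$), and one must show separately that these vanish in the limit; this uses the zeroth-moment equation~\eqref{serveallafine} together with the bound $\|(\xi^h_k)'\|_{L^\infty}\le Ch^{\alpha-2}$ and a careful tuning of the truncation scale $\omega_h$ in~\eqref{oh1}--\eqref{oh2}. Without this approximate-inverse construction, the argument does not close for $2<\alpha<3$; for $\alpha\ge 3$ your outline is essentially that of~\cite{M-S}.
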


The proof of Theorem~\ref{psstball} is closely related to \cite{M-S} and uses
as key tool the rigidity estimate proved in \cite{F-J-M}.
The main new idea with respect to \cite{M-S} is the construction
of a sequence of suitable ``approximate inverse functions'' of the
deformations $y^h$ (see Lemma~\ref{succappr}), which allows us to extend
the results of \cite{M-S} to the range of scalings $\alpha\in(2,3)$.
This construction is based on a careful study of the asymptotic
development of the deformations $y^h$ in terms of approximate displacements
and uses in a crucial way the fact that the limit space dimension is 
equal to one.

\section{Preliminary Results}\label{sec:pr}

In this section we recall the expression of the
$\Gamma$-limits $\cal J_\alpha$ identified in \cite{M-M} and \cite{S}
and we prove some preliminary results.

We start by introducing some notation.
Let $Q_3:\mthree\to[0,+\infty)$ be the quadratic form of linearized elasticity:
$$
Q_3(F):=D^2W(Id)F{\,:\,}F \quad \textrm{for every }F \in\mthree.
$$
We will denote by $\cal L$ the associated linear map on $\mthree$ given by
$\cal L:=D^2W(Id)$.
Let
\begin{equation}\label{EE}
\mathbb{E}:=\min_{a,b\in \R^3} Q_3(e_1|a|b),
\end{equation}
and let $Q_1$ be the quadratic form defined on the space $\mthree_{skew}$ of
skew-symmetric matrices given by
\begin{equation}\label{QQ}
Q_1(F):=\min_{\beta\in W^{1,2}(S,\R^3)}
\int_S Q_3\Big( x_2Fe_2+x_3Fe_3\,\Big|\,\partial_2\beta\,\Big|\,\partial_3 \beta\Big)\,
dx_2dx_3
\end{equation}
for every $F\in \mthree_{skew}$. 
It is easy to deduce from the assumptions (H1)--(H6) that
$\mathbb E$ is a positive constant and $Q_1$ is a positive
definite quadratic form.

The functionals $\cal J_\alpha$ are defined on the space 
$$
H:=W^{1,2}(0,L){\times}W^{2,2}(0,L){\times}
W^{2,2}(0,L){\times} W^{1,2}(0,L)
$$ 
and are finite on
the class $\mathcal A_\alpha$, which can be described as follows:
\begin{multline}\nonumber
\cal A_\alpha:=\big\{(u,v_2,v_3,w)\in H: \ u'+\tfrac12[(v_2')^2+(v_3')^2]=0 \text{ in } (0,L)
\textrm{ and}
\smallskip
\\
u(0)=v_k(0)=v_k'(0)=w(0)=0 \text{ for
}k=2,3\big\}
\end{multline}
for $2<\alpha<3$, and 
$$
\cal A_{\alpha}:=\big\{(u,v_2,v_3,w)\in H: \
u(0)=v_k(0)=v_k'(0)=w(0)=0 \text{ for }k=2,3\big\}
$$
for $\alpha\geq 3$.

For $2<\alpha<3$ the functional $\cal J_\alpha$ is given by
\begin{equation}\label{i<3}
\mathcal J_{\alpha}(u,v_2,v_3,w) =
\frac{1}{2}\myintol{Q_1(A')}-\myintol{(f_2v_2+f_3v_3)}
\end{equation}
for every $(u,v_2,v_3,w)\in\cal A_\alpha$, $\mathcal J_{\alpha}(u,v_2,v_3,w) =+\infty$
otherwise in $H$. In \eqref{i<3} the function
$A\in W^{1,2}((0,L),\mthree)$ is defined by
\begin{equation}\label{A-def}
A(x_1):=\left(\begin{array}{ccc}
0 & -v_2'(x_1) & -v_3'(x_1) \\
v_2'(x_1) & 0 & -w(x_1) \\
v_3'(x_1) & w(x_1) & 0
\end{array}\right)
\end{equation}
for a.e.\ $x_1\in(0,L)$.

For $\alpha=3$ the $\Gamma$-limit is given by
\begin{align}\nonumber
\cal J_3 (u,v_2,v_3,w)=
\frac12 \int_0^L \mathbb{E}\,\big(&u'+\tfrac12[(v'_2)^2+(v'_3)^2]\big)^2\, dx_1
\\
\label{i3}
& {}+ \frac{1}{2}\myintol{Q_1(A')} -\myintol{(f_2v_2+f_3v_3)}
\end{align}
for every $(u,v_2,v_3,w)\in\cal A_\alpha$, $\mathcal J_3(u,v_2,v_3,w) =+\infty$
otherwise in $H$.

Finally, for $\alpha>3$ the $\Gamma$-limit is given by 
\begin{align}\nonumber
\cal J_{\alpha}(u,v_2,v_3,w) =
\frac{1}{2}\int_0^L\mathbb{E}\,(u')^2\, dx_1 & + \frac{1}{2}\myintol{Q_1(A')}
\\
\label{i>3} &{}-\myintol{
(f_2v_2+f_3v_3)}
\end{align}
for every $(u,v_2,v_3,w)\in\cal A_\alpha$, $\mathcal J_{\alpha}(u,v_2,v_3,w) =+\infty$
otherwise in $H$.

We can now compute the Euler-Lagrange equations for the
functionals $\cal J_{\alpha}$ introduced above. 
We first recall the following lemma.

\begin{lem}\label{lem3}
Let $F\in\mthree_{skew}$ and let 
$\cal G_F:W^{1,2}(S,\R^3)\to [0,+\infty)$ be the functional   
$$
\cal G_F(\beta):=\int_S Q_3\Big(x_2Fe_2+x_3Fe_3\,\Big|\, \partial_2\beta \,\Big| 
\,\partial_3 \beta\Big)\, dx_2dx_3
$$
for every $\beta\in W^{1,2}(S,\R^3)$. Then $\cal G_F$ is convex and has a
unique minimizer in the class
$$ 
\cal B: =\Big\{\beta\in W^{1,2}(S,\R^3):\myintegra{\beta}=\myintegra{\partial_2
\beta}=\myintegra{\partial_3 \beta}=0 \Big\}.
$$ 
Furthermore, a function $\beta\in\cal B$ is the minimizer of $\cal G_F$ if and
only if the map $E:S\to\mthree$ defined by
\begin{equation}
E:=\cal L\Big(x_2Fe_2+x_3Fe_3\,\Big|\,\partial_2\beta\,\Big|\,\partial_3 \beta\Big) 
\end{equation}
satisfies in a weak sense the following problem:
$$
\begin{cases}
\div_{x_2,x_3}(Ee_2|Ee_3)=0 & \text{ in } S,
\smallskip\\
(Ee_2|Ee_3)\nu_{\partial S}=0 & \text{ on } \partial S,
\end{cases}
$$
where $\nu_{\partial S}$ is the unit normal to $\partial S$. Finally,
the minimizer depends linearly on $F$.
\end{lem}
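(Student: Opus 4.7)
My plan is to treat $\cal G_F$ as a convex quadratic functional on the Hilbert space $W^{1,2}(S,\R^3)$ and apply the direct method of the calculus of variations. \emph{Convexity} is immediate from (H4)--(H6): since $Id$ is a minimum of $W$, the Hessian $\cal L = D^2W(Id)$ is positive semidefinite, so $Q_3$ is a PSD quadratic form on $\mthree$; $\cal G_F$ is then convex as the integral of $Q_3$ composed with an affine-in-$\beta$ map.

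\emph{Existence and uniqueness} in $\cal B$ follow from a Korn--Poincar\'e coercivity argument. The bound $W(F)\geq C\,\dist^2(F,SO(3))$ from (H5) together with (H6) implies $Q_3(M)\geq c|\sym M|^2$ for every $M\in\mthree$. Applied to
$$M=\big(x_2Fe_2+x_3Fe_3\,\big|\,\partial_2\beta\,\big|\,\partial_3\beta\big),$$
the symmetric part of $M$ controls the 2D symmetrized gradient of the in-plane components $(\beta_2,\beta_3)$ on $S$ and, up to the fixed first column, the gradient of $\beta_1$. Combining the 2D Korn inequality on $S$ applied to $(\beta_2,\beta_3)$ with Poincar\'e applied to $\beta_1$---with the rigid and affine null modes killed precisely by the three integral constraints defining $\cal B$---yields $\cal G_F(\beta)+C_F\geq c\|\beta\|^2_{W^{1,2}(S)}$. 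Weak lower semicontinuity (convexity $+$ continuity) and Banach--Alaoglu then produce a minimizer, and strict convexity on $\cal B$ (once coercivity is secured) gives uniqueness.

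For the \emph{Euler--Lagrange characterization}, the first variation of $\cal G_F$ along any $\gamma\in\cal B$ (a linear subspace of $W^{1,2}(S,\R^3)$) reads
$$\tfrac{1}{2}\tfrac{d}{d\epsilon}\Big|_{\epsilon=0}\cal G_F(\beta+\epsilon\gamma)=\int_S E:(0\,|\,\partial_2\gamma\,|\,\partial_3\gamma)\,dx_2dx_3=\int_S (Ee_2\,|\,Ee_3):\nabla_{x_2,x_3}\gamma\,dx_2dx_3.$$
Vanishing of this for every $\gamma\in\cal B$ is exactly the weak form of the stated divergence/Neumann problem: testing with $\gamma\in C^\infty_c(S,\R^3)$ (which up to subtracting its mean belongs to $\cal B$, since only $\nabla\gamma$ enters) yields the interior equation $\div_{x_2,x_3}(Ee_2|Ee_3)=0$, while testing with smooth $\gamma$ up to the boundary and integrating by parts---using \eqref{ipsimm} to project onto $\cal B$ and handle the affine modes---delivers the vanishing Neumann trace $(Ee_2|Ee_3)\nu_{\partial S}=0$.

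Finally, \emph{linearity} in $F$ follows because $F\mapsto x_2Fe_2+x_3Fe_3$ and $\cal L$ are both linear, so the Euler--Lagrange problem is linear in $\beta$ with datum depending linearly on $F$; uniqueness then forces $F\mapsto\beta_F$ to be linear. The \emph{main technical obstacle} is the coercivity step: one must simultaneously extract a 2D Korn bound on $(\beta_2,\beta_3)$ and a Poincar\'e bound on $\beta_1$ from $|\sym M|^2$, controlling the cross terms produced by the $F$-dependent first column (which one absorbs into the right-hand side via Young's inequality) and verifying that the three scalar averages defining $\cal B$ quotient out exactly the kernel of the full symmetrized gradient.
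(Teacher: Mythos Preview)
The paper does not actually prove this lemma; it simply cites \cite[Lemma~2.1]{M-M2} and \cite[Remark~3.4]{M-M3}. Your strategy---convexity of $Q_3$, coercivity on $\cal B$ via Korn for $(\beta_2,\beta_3)$ and Poincar\'e for $\beta_1$, then the direct method, first variation, and linearity by uniqueness---is the standard one and is essentially what those references carry out.

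One point deserves more care than you give it. When you pass from ``first variation vanishes for $\gamma\in\cal B$'' to the full weak Neumann problem, subtracting the mean is not enough: a general $\gamma\in C^\infty(\overline S,\R^3)$ must be corrected by an affine function $c_0+c_2x_2+c_3x_3$ to land in $\cal B$, and this shifts the variation by $c_2{\,\cdot\,}\int_S Ee_2+c_3{\,\cdot\,}\int_S Ee_3$. So you only obtain the Neumann condition once you know $\int_S Ee_k=0$ for $k=2,3$. The clean way to close this is to observe that the \emph{global} minimizer over $W^{1,2}(S,\R^3)$ (unique modulo the four-dimensional kernel of constants plus infinitesimal planar rotations) satisfies the Euler--Lagrange equation against \emph{all} test functions; testing with $\gamma=x_k e_i$ gives $\int_S Ee_k=0$, and since $E$ is symmetric this forces $\int_S E=\lambda\, e_1\otimes e_1$. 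But $\int_S(\sym M)_{11}=F_{12}\int_S x_2+F_{13}\int_S x_3=0$ by \eqref{ipsimm}, and positive definiteness of $\cal L$ on symmetric matrices then forces $\lambda=0$, hence $\int_S\sym M=0$. This is exactly what allows you to normalize the global minimizer by a kernel element so that it lies in $\cal B$, and it is where \eqref{ipsimm} genuinely enters. Your phrase ``using \eqref{ipsimm} to project onto $\cal B$ and handle the affine modes'' gestures at this but does not make the mechanism explicit.
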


\begin{proof}
See \cite[Lemma 2.1]{M-M2} and \cite[Remark 3.4]{M-M3}. 
\end{proof}

We shall use the following notation: for each $F\in L^1(\Omega,\mthree)$ 
we define the zeroth order moment of $F$ as the function 
$\lin{F}:(0,L)\to \mthree$ given by
$$
\lin{F}(x_1):=\myintegra{F(x)}
$$
for a.e.\ $x_1\in(0,L)$. We also introduce the first order moments of
$F$ as the functions $\til{F},\capp{F}:(0,L)\to\mthree$ given by
$$
\til{F}(x_1):=\myintegra{x_2F(x)}, \qquad
\capp{F}(x_1)=\myintegra{x_3F(x)}
$$
for a.e.\ $x_1\in(0,L)$.

The following proposition follows now from straightforward computations.

\begin{prop}
Let $(u,v_2,v_3,w)\in \cal A_{\alpha}$. For a.e.\ $x_1\in(0,L)$ 
let $\beta(x_1,\cdot,\cdot)\in \cal B$ be the minimizer of $\cal G_{A'(x_1)}$, 
where $A'$ is the derivative of the function $A$ introduced in \eqref{A-def}.
Let also $E:\Omega\to\mthree$ be defined by
$$
E:=\cal L \Big( x_2A'e_2+x_3A'e_3\,\Big|\,\partial_2\beta
\,\Big|\,\partial_3 \beta\Big),
$$
and let $\til{E}$ and $\capp{E}$ be its first order moments.
Then 
\begin{enumerate}[(1)]
\item $(u,v_2,v_3,w)$ is a stationary point of $\cal J_3$ if and only if 
the following equations are satisfied:
\begin{align}
& \label{eq1a}
u'+\frac{1}{2}[(v_2')^2+(v_3')^2]=0 \quad \text{ in } (0,L), 
\medskip\\
& \label{eq2a}
\begin{cases}
\til{E}''_{11}+f_2=0 \quad \text{in } (0,L),
\smallskip\\
\til{E}_{11}(L)=\til{E}'_{11}(L)=0, 
\end{cases}
\\
& \label{eq2b}
\begin{cases}
\capp{E}''_{11}+f_3=0 \quad \text{in }(0,L),
\smallskip\\ 
\capp{E}_{11}(L)=\capp{E}'_{11}(L)=0, 
\end{cases}
\\
&\label{eq3}
\begin{cases}
\til{E}'_{12}=\capp{E}'_{13} \quad \text{in }(0,L),
\smallskip\\ 
\til{E}_{12}(L)=\capp{E}_{13}(L);
\end{cases}
\end{align}
\item if $\alpha>3$, $(u,v_2,v_3,w)$ is a stationary point of $\cal J_{\alpha}$ 
if and only if 
\begin{equation} \label{eq1b} 
u'=0\quad \text{ in } (0,L)
\end{equation}
and \eqref{eq2a}--\,\eqref{eq3} are satisfied; 
\item if $2<\alpha<3$, $(u,v_2,v_3,w)$ is a stationary point of $\cal
J_{\alpha}$ 
if and only if \eqref{eq2a}--\,\eqref{eq3} are satisfied.
 \end{enumerate}
\end{prop}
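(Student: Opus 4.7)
The statement is a standard Euler--Lagrange computation for the functionals $\cal J_\alpha$ on the affine classes $\cal A_\alpha$. My plan is a direct first-variation calculation for each scaling, the only technical point being the treatment of the bending term $\tfrac{1}{2}Q_1(A')$ via an envelope argument built on Lemma~\ref{lem3}.

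Since $\beta(x_1,\cdot,\cdot)$ minimizes $\cal G_{A'(x_1)}$ in $\cal B$, varying $A'$ by $tG$ changes $\tfrac{1}{2}Q_1(A')$ only through its explicit dependence on $A'$, giving
\[
\tfrac{d}{dt}\Big|_{t=0}\tfrac{1}{2}Q_1(A'+tG)=\int_S E:\bigl(x_2 G e_2+x_3 G e_3\,\big|\,0\,\big|\,0\bigr)\,dx_2dx_3.
\]
Expanding the Frobenius product and integrating over $S$ rewrites the right-hand side as a linear combination of the first moments $\til{E}_{ij}$, $\capp{E}_{ij}$ paired with entries of $G$. This identity (together with the symmetry of $E$ and, where needed, the cross-section equilibrium from Lemma~\ref{lem3}) converts every bending contribution in the variation of $\cal J_\alpha$ into explicit one-dimensional integrals of moments of $E$.

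With this in hand I would dispose of each regime in turn. For $\alpha=3$, a variation in $u$ alone uses only the stretching term of \eqref{i3} and yields \eqref{eq1a} at once, since $\delta u(0)=0$ but $\delta u'$ is otherwise free. Once \eqref{eq1a} holds, the stretching term contributes nothing to variations in $v_k$; a variation in $v_2$ then involves only the bending and loading terms, and two integrations by parts (using $\delta v_2(0)=\delta v_2'(0)=0$ and the freedom of $\delta v_2$, $\delta v_2'$ at $x_1=L$) extract both the PDE and the natural boundary conditions in \eqref{eq2a}. The same procedure applied to $v_3$ produces \eqref{eq2b}, and a single integration by parts applied to $w$ (with $\delta w(0)=0$) produces \eqref{eq3}. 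For $\alpha>3$ the stretching term becomes $\tfrac{1}{2}\int_0^L \mathbb E(u')^2\,dx_1$, whose $u$-variation yields $u'=0$ (equation \eqref{eq1b}); the remaining derivations are unchanged. For $2<\alpha<3$, $\cal J_\alpha$ does not depend on $u$ while the nonlinear identity $u'+\tfrac{1}{2}[(v_2')^2+(v_3')^2]=0$ is hardwired into $\cal A_\alpha$; an admissible variation in $(v_2,v_3,w)$ can always be completed to an admissible variation of $(u,v_2,v_3,w)$ by choosing $\delta u$ to preserve the constraint, and since $\cal J_\alpha$ does not see $u$ this choice is immaterial, so \eqref{eq2a}--\eqref{eq3} follow exactly as before.

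The computation is elementary; the principal bookkeeping step is matching the nonzero entries of $G=\delta A'$ (controlled by $\delta v_2''$, $\delta v_3''$, and $\delta w'$) to the correct moments of $E$ in the Frobenius product, and then tracking carefully the boundary terms generated at $x_1=L$ by each integration by parts. This is the only point where care is genuinely required; there is no analytic obstacle beyond it.
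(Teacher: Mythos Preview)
Your approach is correct and is precisely what the paper has in mind: the paper itself offers no proof beyond the sentence ``follows now from straightforward computations,'' and a direct first-variation calculation with an envelope argument for the $Q_1$-term (via Lemma~\ref{lem3}) is exactly that computation. The only point worth flagging is that when you carry out the $w$-variation you will obtain $\int_0^L(\til{E}_{13}-\capp{E}_{12})\,\delta w'\,dx_1=0$ (this is also what emerges in Step~7 of the proof of Theorem~\ref{psstball}); reconciling this with \eqref{eq3} as written requires the moment identities that follow from the cross-section equilibrium in Lemma~\ref{lem3}, so be sure to invoke them explicitly.
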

 
\begin{oss}
If $(u,v_2,v_3,w)\in \cal A_{\alpha}$ and $2<\alpha<3$, then
$u$ is uniquely determined in terms of $v_2$ and $v_3$. Indeed by the
constraint
 $$u'+\frac{(v_2')^2+(v_3')^2}{2}=0 \textrm{ a.e. in }(0,L)$$
and the boundary condition $u(0)=0$, we have
\begin{equation}\label{ux1}
u(x_1)=-\int_0^{x_1}{\frac{(v_2')^2+(v_3')^2}{2}}\quad\textrm{ for a.e. }
x_1 \textrm{ in
}(0,L).
\end{equation}
For $\alpha\geq3$ the same conclusion holds when $(u,v_2,v_3,w)\in \cal A_{\alpha}$
is a stationary point of $\cal J_{\alpha}$. Indeed, if $\alpha=3$, (\ref{eq1a}) yields 
\eqref{ux1}, while, if $\alpha>3$, (\ref{eq1b}) gives 
$$
u=0\textrm{ a.e. in }(0,L).
$$
Using the previous observations and the strict convexity of $Q_1$, it is easy to show that
for every $\alpha>2$, $\cal J_{\alpha}$ has a unique stationary
point that is a minimum point.
\end{oss}

\begin{oss}
For what concerns the three-dimensional functionals $\cal J^h$, under additional hypotheses on $W$ (such as polyconvexity, see \cite{B1}) it is possible to show existence of global minimizers, and therefore of stationary points. Furthermore, they automatically satisfy the energy estimate \eqref{energiapiccola2} (see \cite[proof of Theorem~2]{F-J-M2}). For general $W$ the existence of stationary points (according to Definition~\ref{pstcresc} or to the classical formulation) is a subtle issue. We refer to \cite[Section~2.7]{B} for a discussion of results in this direction.
\end{oss}

From now on we shall work with sequences of deformations $y^h\in
W^{1,2}(\Omega,\R^3)$, satisfying the boundary condition \eqref{bdary}
and the uniform energy estimate \eqref{energiapiccola2} with $\alpha>2$.
This bound, combined with the coercivity condition (H5), 
provides us with a control on the distance of $\nablah y^h$ from $SO(3)$. 
This fact, together with the geometric
rigidity estimate by Friesecke, James and M\"{u}ller \cite[Theorem 3.1]{F-J-M}, 
allows us to construct an approximating sequence
of rotations $(R^h)$, whose $L^2$-distance from $\nablah y^h$ is of the same
order in terms of $h$ of the
$L^2$-norm of $\dist(\nablah y^h, SO(3))$. More precisely, the following result
holds
true.

\begin{teo}\label{rot}
Assume that $W:\mthree\to[0,+\infty]$ is continuous and
satisfies (H3)--(H6).
Let $\alpha>2$ and let $(y^h)$ be a sequence in $W^{1,2}(\Omega,\R^3)$
satisfying \eqref{bdary} and \eqref{energiapiccola2} for every $h>0$.
Then there exists a sequence 
$(R^h)$ in $C^\infty((0,L),\mthree)$ such that
\begin{eqnarray}
\label{c1}& R^h(x_1)\in SO(3)\quad \text{for every } x_1 \in (0,L),\\
\label{c2}& \|\nablah y^h-R^h\|_{L^2}\leq Ch^{\alpha-1},\\ 
\label{der} &\|(R^h)'\|_{L^2}\leq Ch^{\alpha-2}, \\
\label{linfnorm} & \|R^h-Id\|_{L^\infty}\leq Ch^{\alpha-2}.
\end{eqnarray}
\end{teo}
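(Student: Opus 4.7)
The plan is to follow the standard approach based on the Friesecke--James--M\"uller rigidity estimate, applied on cylinders of diameter $O(h)$ on which the rigidity constant is independent of $h$ by scale invariance.

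First I would unscale by setting $v^h(y_1,y_2,y_3):=y^h(y_1,y_2/h,y_3/h)$ on $\Omega_h=(0,L)\times hS$, so that $\nabla v^h(x_1,hx_2,hx_3)=\nablah y^h(x_1,x_2,x_3)$; the energy bound \eqref{energiapiccola2} together with (H5) then gives $\int_{\Omega_h}\dist^2(\nabla v^h,SO(3))\,dy\le Ch^{2\alpha}$. Next I would cover $(0,L)$ by overlapping intervals $I_k=(kh,(k+2)h)$ and apply \cite[Theorem~3.1]{F-J-M} on each cylinder $U_k:=I_k\times hS$, which is an isometric translate of the $h$-dilation of the single fixed cylinder $(0,2)\times S$. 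This produces $R_k\in SO(3)$ with
\[
\int_{U_k}|\nabla v^h-R_k|^2\,dy\le C\int_{U_k}\dist^2(\nabla v^h,SO(3))\,dy,
\]
with $C$ independent of $k$ and $h$. Summing over the finitely-overlapping cover and exploiting $|U_k\cap U_{k+1}|\sim h^3$ yields both $\sum_k\int_{U_k}|\nabla v^h-R_k|^2\le Ch^{2\alpha}$ and $\sum_k|R_k-R_{k+1}|^2\le Ch^{2\alpha-3}$.

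Set $\tilde R^h(x_1):=R_k$ for $x_1\in(kh,(k+1)h)$, and let $R^h$ be the pointwise projection onto $SO(3)$ of a mollification of $\tilde R^h$ on scale $h$; the projection is well defined because $\max_k|R_k-R_{k+1}|\to 0$ as $h\to 0$ (since $\alpha>2$). Then \eqref{c1} holds by construction. Estimate \eqref{c2} follows from the bound on $\|\nabla v^h-\tilde R^h\|_{L^2(\Omega_h)}$ after the factor $h^{-2}$ picked up in the change of variables back to $\Omega$, and \eqref{der} follows from
\[
\int_0^L|(R^h)'|^2\,dx_1\le \frac{C}{h}\sum_k|R_k-R_{k+1}|^2\le Ch^{2\alpha-4},
\]
because the smoothed jumps of $\tilde R^h$ contribute derivatives of size $|R_k-R_{k+1}|/h$ on intervals of length $h$.

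The bound \eqref{linfnorm} is where the boundary condition \eqref{bdary} enters: on $U_0$, the trace of $v^h$ on $\{0\}\times hS$ is $(0,y_2,y_3)$, so the second and third columns of its gradient trace equal $e_2$ and $e_3$, and pairing this with the local rigidity bound $\|\nabla v^h-R_0\|_{L^2(U_0)}\le Ch^\alpha$ via a Poincar\'e--trace argument on the leftmost cylinder (using \eqref{ipsimm} to anchor the cross-sectional averages) forces $|R_0-Id|\le Ch^{\alpha-2}$. Combining with $\|R^h-R^h(0)\|_{L^\infty}\le \sqrt{L}\,\|(R^h)'\|_{L^2}\le Ch^{\alpha-2}$ then yields \eqref{linfnorm}. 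The main obstacle is precisely this boundary step: extracting the sharp rate $h^{\alpha-2}$ for $|R_0-Id|$ from the partial information provided by the trace requires careful use of \eqref{ipsimm} together with the interplay between the tangential and normal components of the boundary data.
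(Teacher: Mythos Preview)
Your outline is correct and is precisely the standard argument the paper defers to by citing \cite[Proposition~4.1]{M-M-S}; the paper itself gives no proof. The boundary step you flag as the main obstacle is in fact easier than you suggest: your Poincar\'e--trace argument on $U_0$ (with the moment conditions \eqref{ipsimm}) already yields $|R_0 e_k-e_k|\le Ch^{\alpha-3/2}$ for $k=2,3$, hence $|R_0-Id|\le Ch^{\alpha-3/2}$, which is stronger than the required $h^{\alpha-2}$; an even cleaner alternative is to extend $y^h$ by the identity to $x_1<0$ (the boundary condition \eqref{bdary} makes this a $W^{1,2}$ extension with zero elastic energy on the added part) and run the rigidity construction on the enlarged domain, so that the neighbouring-rotation estimate applied across $x_1=0$ gives $|R_0-Id|\le Ch^{\alpha-3/2}$ directly.
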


We omit the proof as it follows closely the proof of
\cite[Proposition~4.1]{M-M-S}.
Owing to the previous approximation result, one can deduce the following
compactness properties.

\begin{teo}\label{teoa}
Under the assumptions of Theorem~\ref{rot}, let
$u^h$, $v^h_2$, $v^h_3$, $w^h$ be the scaled displacements and twist function
introduced in (\ref{defuh})--(\ref{defwh}).
Then
\begin{equation}\label{convyh}
y^h\to x_1e_1\quad \text{strongly in }W^{1,2}(\Omega,\R^3)
\end{equation}
and there exists $(u,v_2,v_3,w)\in \cal A_{\alpha}$ such that, up to
subsequences, we have
\begin{eqnarray}
& &u^h\to u \quad \text{strongly in }W^{1,2}(0,L)\quad \text{if }
2<\alpha<3,
\label{umin}
\\
& &u^h\deb u \quad \text{weakly in }W^{1,2}(0,L)\quad \text{if }
\alpha \geq 3,
\label{u}
\\
& &v^h_k\to v_k\quad \text{strongly in }W^{1,2}(0,L), \quad k=2,3,
\label{v}\\
& &w^h\deb w\quad \text{weakly in }W^{1,2}(0,L).\label{w}
\end{eqnarray}
Moreover, let $A \in W^{1,2}((0,L),\mthree)$ be the function defined in \eqref{A-def}.
Then, if $R^h$ is the approximating sequence of rotations given by
Theorem~\ref{rot}, the following convergence properties hold true:
\begin{eqnarray}
\label{grad-Id}& &\frac{\nablah{y}^h-Id}{h^{\alpha-2}}\to A \quad
\text{strongly in }
L^2(\Omega, \mthree),
\\
\label{ah}& &A^h:=\frac{R^h-Id}{h^{\alpha-2}}\deb A \quad 
\text{weakly in } W^{1,2}((0,L), \mthree),
\\
\label{syma}& &\frac{\sym (R^h-Id)}{h^{2(\alpha-2)}}\rightarrow \frac{A^2}{2}
\quad 
\text{uniformly in } (0,L).
\end{eqnarray}
\end{teo}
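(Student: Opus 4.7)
The plan is to base everything on the approximating rotation field $R^h$ produced by Theorem~\ref{rot}. Set $A^h := (R^h-Id)/h^{\alpha-2}$; by \eqref{linfnorm} and \eqref{der} it is bounded in $W^{1,2}((0,L),\mthree)$, hence up to a subsequence $A^h\deb A$ weakly in $W^{1,2}$ and strongly in $L^\infty$ by the one-dimensional compact embedding, giving \eqref{ah}. Expanding $R^h(R^h)^T=Id$ yields
\[
A^h+(A^h)^T=-h^{\alpha-2}A^h(A^h)^T,
\]
so $\sym A^h\to 0$ uniformly (since $\alpha>2$), which forces $A\in\mthree_{skew}$; dividing the same identity by $h^{\alpha-2}$ then gives \eqref{syma} with limit $\tfrac12 A^2=-\tfrac12 AA^T$.

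Next I would decompose
\[
\frac{\nablah y^h-Id}{h^{\alpha-2}}=\frac{\nablah y^h-R^h}{h^{\alpha-2}}+A^h;
\]
by \eqref{c2} the first summand has $L^2$-norm of order $h$, while $A^h$, viewed as an element of $L^2(\Omega)$ independent of $(x_2,x_3)$, converges strongly to $A$ in $L^2(\Omega)$. This proves \eqref{grad-Id}, and since $\alpha>2$ it also gives $\nablah y^h\to Id$ strongly in $L^2$, so $\partial_1 y^h\to e_1$ and $\partial_k y^h\to 0$ for $k=2,3$. Combining with the boundary condition \eqref{bdary} and Poincar\'e's inequality in $x_1$ yields \eqref{convyh}.

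The convergence of the rescaled displacements follows by integrating components of \eqref{grad-Id} over $S$. For $k=2,3$,
\[
(v^h_k)'(x_1)=\myintegra{\tfrac{\partial_1 y^h_k}{h^{\alpha-2}}}\longrightarrow \myintegra{A_{k1}(x_1)}=A_{k1}(x_1)
\]
strongly in $L^2(0,L)$; since $v^h_k(0)=0$ by \eqref{ipsimm} and \eqref{bdary}, Poincar\'e gives \eqref{v} with $v_k'=A_{k1}$, and the skew-symmetry of $A$ forces its first row to be $(0,-v_2',-v_3')$. For $u^h$ in the regime $2<\alpha<3$, I would split
\[
\partial_1 y^h_1-1=(\partial_1 y^h-R^h e_1)_1+(R^h-Id)_{11}
\]
and divide by $h^{2(\alpha-2)}$: the first contribution vanishes in $L^2(0,L)$ at rate $O(h^{3-\alpha})$ by \eqref{c2}, while the second, by the first paragraph, converges uniformly to $\tfrac12[A^2]_{11}=-\tfrac12[(v_2')^2+(v_3')^2]$. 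With $u^h(0)=0$ this yields \eqref{umin} together with the nonlinear constraint defining $\cal A_\alpha$ in this regime; for $\alpha\geq 3$ the analogous decomposition (rescaled by $h^{\alpha-1}$) shows that $(u^h)'$ is bounded in $L^2$, giving the weak convergence \eqref{u}.

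For $w^h$, assumption \eqref{ipsimm} annihilates the moments of $R^h e_1$ against $x_2,x_3$ on $S$, so
\[
(w^h)'(x_1)=\tfrac{1}{\mu(S)\,h^{\alpha-1}}\myintegra{\bigl[x_2(\partial_1 y^h-R^h e_1)_3-x_3(\partial_1 y^h-R^h e_1)_2\bigr]},
\]
which is bounded in $L^2(0,L)$ by \eqref{c2}; combined with $w^h(0)=0$ this gives \eqref{w}. The main obstacle is identifying the weak limit $w$ with the $(3,2)$-entry of $A$, so that $A$ has precisely the form \eqref{A-def}. Unlike the previous steps, this identification is not a direct consequence of the rigidity estimate: it requires a finer expansion of $y^h$ in the cross-sectional variables, using $\partial_k y^h/h = R^h e_k + O(h^{\alpha-1})$ in $L^2$ for $k=2,3$ together with \eqref{ipsimm}, in order to reconstruct the cross-sectional twist from the first-order moments of $y^h$ in $x_2,x_3$; it is here that the one-dimensionality of the limit enters most crucially.
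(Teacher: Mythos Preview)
The paper does not actually prove this theorem: it simply refers to \cite[Theorem~3.3]{S}. Your sketch follows precisely the standard route taken there, and the arguments you give for \eqref{ah}, \eqref{syma}, \eqref{grad-Id}, \eqref{convyh}, \eqref{v}, \eqref{umin} and \eqref{u} are all correct. Two points deserve further attention.

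First, your treatment of $w^h$ correctly establishes boundedness in $W^{1,2}$ and flags the identification $w=A_{32}$ as the remaining task; the idea you indicate is right and can be made explicit. From $\partial_j y^h/h=R^he_j+O(h^{\alpha-1})$ in $L^2$ for $j=2,3$ and Poincar\'e--Wirtinger on $S$ one obtains
\[
\Big\|\frac{y^h_k}{h}-R^h_{k2}x_2-R^h_{k3}x_3-h^{\alpha-3}v^h_k\Big\|_{L^2(\Omega)}\leq Ch^{\alpha-1},\qquad k=2,3.
\]
Forming $x_2\,y^h_3/h-x_3\,y^h_2/h$, integrating over $S$ and using \eqref{ipsimm} gives
\(
\mu(S)\,w^h=(A^h)_{32}\int_S x_2^2-(A^h)_{23}\int_S x_3^2+O(h)
\)
in $L^2(0,L)$; since $(A^h)_{23}+(A^h)_{32}=O(h^{\alpha-2})$ uniformly by your first paragraph, this yields $w^h=(A^h)_{32}+o(1)$, hence $w=A_{32}$. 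So the step goes through, and incidentally one-dimensionality is not used here in any more essential way than in the rest of the argument.

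Second --- and this you do not mention --- membership in $\cal A_\alpha$ requires $v_k'(0)=0$, that is, $A(0)=0$. This does \emph{not} follow from Theorem~\ref{rot} as stated: \eqref{linfnorm} only gives $|A^h(0)|\leq C$, which is not enough. One needs the extra information, implicit in the construction of $R^h$ (extend $y^h$ by the identity for $x_1<0$ before applying rigidity, so that $R^h$ equals $Id$ on a left neighbourhood of the origin up to an $O(h)$ boundary layer), that $|R^h(0)-Id|=o(h^{\alpha-2})$. You should either record this as part of the conclusion of Theorem~\ref{rot} or supply a short separate argument.
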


For the proof we refer to \mbox{\cite[Theorem 3.3]{S}}.

\

We conclude this section by proving a lemma, which will be crucial to extend
the convergence of equilibria result to the scalings $\alpha\in(2,3)$.

\begin{lem}\label{succappr}
Under the assumptions of Theorem~\ref{rot}, there exist two
sequences $(\xi^h_k)$, $k=2,3$, such that for every $h>0$
\begin{eqnarray}
\label{bordoxih}& &\xi^h_k\in C^1_b(\R), \quad \xi^h_k(0)=0,
\vphantom{\frac1h}\\
\label{convxih}& &\frac{y^h_k}{h}-\frac{1}{h}\xi_k^h\comp y^h_1\to x_k \quad
\textrm{strongly in }L^2(\Omega),
\\
\label{limitazionenorme}& &
\|\xi^h_k\|_{L^{\infty}}+\|(\xi^h_k)'\|_{L^{\infty}}\leq Ch^{\alpha-2}.
\end{eqnarray} 
\end{lem}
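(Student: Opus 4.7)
The strategy is to exploit the approximating rotations $R^h$ from Theorem~\ref{rot}: since $\nabla_h y^h \approx R^h(x_1)$ in $L^2$, the first column of $R^h$ captures the leading-order behavior of $\partial_1 y^h$, so I build $\xi^h_k$ as a suitable one-dimensional primitive of that column, reparametrized to absorb the leading longitudinal drift of $y^h_1$. Concretely, set $p^h_j(x_1) := \int_0^{x_1} R^h_{j1}(s)\,ds$ for $j=1,2,3$. From (\ref{linfnorm}), $|R^h_{k1}|\leq Ch^{\alpha-2}$ for $k=2,3$, while $R^h_{11}\geq 1/2$ for $h$ small, so $p^h_1$ is a $C^\infty$-diffeomorphism of $[0,L]$ onto its image; denote its inverse by $\psi^h$. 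Define
\[
\xi^h_k(t) := p^h_k(\psi^h(t)), \qquad k=2,3,
\]
on the range of $p^h_1$, and extend it to a $C^1_b(\R)$ function by a standard smooth truncation of a linear extrapolation, preserving the uniform bounds.

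Properties (\ref{bordoxih}) and (\ref{limitazionenorme}) then follow at once: $\xi^h_k(0)=p^h_k(0)=0$; $\|p^h_k\|_{L^\infty}\leq L\|R^h_{k1}\|_{L^\infty}\leq Ch^{\alpha-2}$; and by the chain rule $(\xi^h_k)'(t) = R^h_{k1}(\psi^h(t))/R^h_{11}(\psi^h(t))$, which is bounded by $2Ch^{\alpha-2}$. For (\ref{convxih}), the identity $\xi^h_k\circ p^h_1 = p^h_k$ together with the boundary condition (\ref{bdary}) (which yields $y^h_k(x)-hx_k=\int_0^{x_1}\partial_1 y^h_k\,dt$ and $y^h_1(x)=\int_0^{x_1}\partial_1 y^h_1\,dt$) gives the decomposition
\[
y^h_k - hx_k - \xi^h_k(y^h_1) = \int_0^{x_1}\!\bigl[\partial_1 y^h_k - R^h_{k1}\bigr]dt + \bigl[\xi^h_k(p^h_1(x_1)) - \xi^h_k(y^h_1)\bigr].
\]
The first summand has $L^2(\Omega)$-norm $O(h^{\alpha-1})$ by (\ref{c2}); for the second, the Lipschitz bound on $\xi^h_k$ together with $\|p^h_1 - y^h_1\|_{L^2(\Omega)}\leq L^{1/2}\|R^h_{11}-\partial_1 y^h_1\|_{L^2}\leq Ch^{\alpha-1}$ (again from (\ref{c2})) yields $L^2$-norm $O(h^{2\alpha-3})$. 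Dividing by $h$ leaves errors of order $h^{\alpha-2}$ and $h^{2\alpha-4}$, both vanishing for $\alpha>2$.

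The main obstacle, and the reason for composing $p^h_k$ with $\psi^h$ rather than taking simply $\xi^h_k:=p^h_k$, is that the naive choice leaves an error term $\tfrac{1}{h}\int_{x_1}^{y^h_1}R^h_{k1}(s)\,ds$ whose $L^2$-norm, for $2<\alpha<3$, is only $O(h^{3\alpha-7})$: the distance $\|y^h_1-x_1\|_{L^2}$ is of order $h^{2(\alpha-2)}$, coming from the non-trivial averaged longitudinal displacement $h^{2(\alpha-2)}u^h(x_1)$, and this control is too weak when $\alpha\leq 7/3$. Reparametrizing by $\psi^h$ absorbs precisely this $u^h$-contribution into the composition, so that the relevant quantity becomes $y^h_1 - p^h_1(x_1)$, whose $L^2$-norm is controlled at the sharper rate $h^{\alpha-1}$ purely by the rigidity estimate (\ref{c2}), uniformly for all $\alpha>2$.
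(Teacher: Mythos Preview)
Your argument is correct and takes a cleaner route than the paper's. Both start from the primitives $p^h_j(x_1)=\int_0^{x_1}R^h_{j1}$ (the paper calls these $\tilde v^h_k$ and $\tilde v^h_1$), and both recognize that for $2<\alpha<3$ one must compensate for the longitudinal drift encoded in $p^h_1$. The paper does this by a \emph{finite} Picard iteration for the fixed-point equation $\psi(t)=t-\tilde v^h_1(\psi(t))$: it sets $\zeta^h_{n_0}(x_1)=x_1-\tilde v^h_1(x_1)$ and then $\zeta^h_n=x_1-\tilde v^h_1\circ\zeta^h_{n+1}$, with the depth $n_0$ chosen according to how close $\alpha$ is to $2$ (namely so that $\alpha>2+1/(2n_0+3)$), and finally $\xi^h_k=\tilde v^h_k\circ\zeta^h_1$; each further iteration gains a factor $h^{2(\alpha-2)}$ in the error. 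You instead take the \emph{exact} inverse $\psi^h=(p^h_1)^{-1}$, available because $(p^h_1)'=R^h_{11}\geq 1/2$, and set $\xi^h_k=p^h_k\circ\psi^h$; the identity $\xi^h_k\circ p^h_1=p^h_k$ then reduces the error to the single quantity $\|p^h_1-y^h_1\|_{L^2}=O(h^{\alpha-1})$, controlled directly by (\ref{c2}). This yields one construction valid uniformly for all $\alpha>2$ together with an explicit rate $O(h^{\alpha-2}+h^{2\alpha-4})$ in (\ref{convxih}), whereas the paper's approach trades the use of an inverse function for a more elementary but $\alpha$-dependent iteration. One small point worth tightening in your write-up: to obtain $\xi^h_k\in C^1_b(\R)$ with the global Lipschitz bound (needed since $y^h_1$ need not stay in $p^h_1([0,L])$), it is cleanest to first extend $R^h_{k1}$ continuously to $\R$ with compact support and $R^h_{11}$ continuously so that it equals $1$ outside a neighbourhood of $[0,L]$; then $p^h_1$ is a $C^1$-diffeomorphism of the whole real line and $\xi^h_k=p^h_k\circ\psi^h$ lies in $C^1_b(\R)$ automatically, with the required bounds holding everywhere.
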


\begin{oss}\label{invappr}
The sequences $(\xi^h_k)$ of the previous lemma can be interpreted as follows: the functions defined by
$$
\omega^h(x)=\Big(x_1,\frac{x_2}{h}-\frac{\xi^h_2(x_1)}{h},\frac{x_3
}{h}-\frac{\xi^h_3(x_1)}{h}\Big)
$$
represent a sort of ``approximate inverse functions'' of the deformations
$y^h$, in the sense that the compositions
$\omega^h\comp y^h$ converge to the identity strongly in $L^2(\Omega,\R^3)$
by \eqref{convyh} and \eqref{convxih}.
\end{oss}

\begin{proof}[Proof of Lemma~\ref{succappr}]
In order to construct the functions $\xi^h_k$, we first study the asymptotic
behaviour of the sequences $(\frac{1}{h}y^h_k)$, $k=2,3.$
By Poincar\'e inequality we obtain the estimate
$$
\Big\|\frac{y^h_k}{h}-x_k-\myintegra{\Big(\frac{y^h_k}{h}-x_k\Big)}
\Big\|_{L^2}\leq C\Big(\Big\|\frac{\partial_k
y^h_k}{h}-1\Big\|_{L^2}+\Big\|\frac{\partial_j
y^h_k}{h}\Big\|_{L^2} \Big),
$$
where $k,j\in\{2,3\}$, $k\neq j$.
Therefore, by \eqref{defvhk} and \eqref{grad-Id} we have
\begin{equation}\label{convx2x3}
 \Big\|\frac{y^h_k}{h}-x_k-h^{\alpha-3}v^h_k\Big\|_{L^2(\Omega)}
\leq Ch^{\alpha-2}.
\end{equation} 
In particular, for $\alpha>3$ it follows that $y^h_k\to x_k$ strongly in
$L^2$, so that, if $\alpha>3$, we can simply take $\xi^h_k=0$ for $k=2,3$ and
every $h>0$. If $2<\alpha\leq 3$, we need to construct a suitable
approximation of $v^h_k$. 
Let $(R^h)$ be the approximating sequence of rotations associated with $(y^h)$
(see Theorems~\ref{rot} and~\ref{teoa}). By (\ref{linfnorm}) and (\ref{syma}) we
deduce the following estimates: 
\begin{eqnarray*}
& &\|R^h_{k1}\|_{L^{\infty}}\leq Ch^{\alpha-2}\quad\textrm{ for }k=2,3,\\  
& &\|R^h_{11}-1\|_{L^{\infty}}\leq Ch^{2(\alpha-2)}.
\end{eqnarray*}
Let $r^h_{k},r^h_{1}\in C(\R)$ be continuous extensions
of the functions $R^h_{k1}$ and $R^h_{11}-1$ to $\R$
such that for every $h>0$
\begin{eqnarray}
 \label{rh0}& &\textrm{supp }r^h_{k},\textrm{ supp }r^h_{1}\subset(-1,L+1),\\
 \label{rh1}& &r^h_{k}=R^h_{k1} \textrm{ in }(0,L)\quad\textrm{ for }k=2,3,\\
 \label{rh2}& &r^h_{1}=R^h_{11}-1\textrm{ in }(0,L),\\
 \label{rh3}& &\|r^h_{k}\|_{L^{\infty}}\leq Ch^{\alpha-2}\quad\textrm{ for
}k=2,3,\\
 \label{rh4}& &\|r^h_{1}\|_{L^{\infty}}\leq Ch^{2(\alpha-2)}.
\end{eqnarray}
We introduce the functions $\tilde v^h_1$, $\tilde v^h_k\in
C^1_b(\R)$ defined by 
\begin{eqnarray}\label{tilvk}
\tilde v^h_k(x_1) & := &\int_0^{x_1} r^h_k(s)\,ds,
\\ 
\label{tilv1}
\tilde v^h_1(x_1) & := & \int_0^{x_1}r^h_{1}(s)\,ds.
\end{eqnarray}
Using the boundary condition \eqref{bdary}, the Poincar\'e inequality, (\ref{c2}), and 
(\ref{grad-Id}), we obtain
\begin{equation}\label{y3hrot}
\Big\|\frac{y^h_k}{h}-x_k-\frac{1}{h}\tilde
v^h_k\Big\|_{L^2}\leq Ch^{\alpha-2},
\end{equation}
and analogously,
\begin{equation}\label{y1hrot}
\|y^h_1-x_1-\tilde v^h_1\|_{L^2}\leq Ch^{\alpha-1}.
\end{equation}
This last inequality, together with (\ref{rh4}), implies that 
\begin{equation}\label{y1x1}
 \|y^h_1-x_1\|_{L^2}\leq Ch^{2(\alpha-2)}\quad\textrm{ for }\alpha\leq3.
\end{equation}
We are now in a position to construct the maps $\xi^h_k$ when
$\alpha\leq3$. If $\alpha=3$, we define $\xi^h_k=\tilde v^h_k$.
Properties (\ref{bordoxih}) and (\ref{limitazionenorme}) follow
immediately. To verify (\ref{convxih}) it is enough to remark that by
(\ref{rh3}) and (\ref{y3hrot}) we have
\begin{eqnarray}
\nonumber\Big\|\frac{y^h_k}{h}-x_k-\frac{\tilde
v^h_k\comp y^h_1}{h}\Big\|_{L^2}
&\leq& Ch + \frac{1}{h}\|\tilde v^h_k\comp y^h_1-\tilde
v^h_k\|_{L^2}\\
\nonumber&\leq& Ch+\frac{1}{h}\|(\tilde
v^h_k)'\|_{L^{\infty}}\|y^h_1-x_1\|_{L^2}\leq
Ch.
\end{eqnarray}
If $2<\alpha<3$, we first fix $n_0\in\mathbb{N}$ such that 
\begin{equation}\label{fixn0}
\alpha>2+\frac{1}{2n_0+3}
\end{equation}
and we introduce a sequence of maps $(\zeta^h_n)$, $n=1,\ldots ,n_0,$
recursively defined as
\begin{equation}\label{zita}
\begin{array}{c l l}
\zeta^h_{n_0}(x_1) &\hspace{-0.2 cm}=\hspace{-0.2 cm}&  x_1-\tilde v^h_1(x_1), 
\medskip\\
\zeta^h_n(x_1) &\hspace{-0.2 cm}=\hspace{-0.2 cm}& x_1-\tilde
v^h_1\comp\zeta_{n+1}(x_1)\quad \text{ for
}n=1,\dots,n_0-1.
\end{array}
\end{equation}
For $k=2,3$ and every $h>0$ we define
\begin{equation}\label{xihk}
\xi^h_k:=\tilde v^h_k\comp\zeta^h_1.
\end{equation}
Since $\zeta^h_{n_0}(0)=0$, we have by induction that $\zeta^h_n(0)=0$ for each
$n=1,2,\ldots,n_0$, so that $\xi^h_k(0)=0$. From the
regularity of $\tilde v^h_1$ and $\tilde v^h_k$ it follows that (\ref{bordoxih})
is satisfied. By (\ref{rh3}) we deduce
\begin{equation}\label{normainftyxih}
 \|\xi^h_k\|_{L^{\infty}}\leq \|\tilde
v^h_k\|_{L^{\infty}}\leq (L+2)
\|r^h_{k}\|_{L^{\infty}}\leq Ch^{\alpha-2}.
\end{equation}
To estimate $\|(\xi^h_k)'\|_{L^{\infty}}$, we first deduce a
recursive bound
for $\|(\zeta^h_n)'\|_{L^{\infty}}$. If $h$ is small
enough, we have
$$
\|(\tilde v^h_1)'\|_{L^{\infty}}\leq 1.
$$ 
By (\ref{zita}) the following inequalities hold true:
\begin{eqnarray}
&&\|(\zeta^h_{n_0})'\|_{L^{\infty}}  \leq  
1+\|(\tilde v^h_1)'\|_{L^{\infty}}\leq 2,
\\
&&\|(\zeta^h_{n})'\|_{L^{\infty}} \leq 
1+\|(\zeta_{n+1}^h)'\|_{L^{\infty}} \quad 
\text{for }n=1,\dots,n_0-1,
\\
&&\label{zita1'} \|(\zeta^h_1)'\|_{L^{\infty}} \leq
1+n_0.
\end{eqnarray}
Now by (\ref{zita1'}) and (\ref{rh3}) we have 
\begin{equation}\label{normainftyder}
\|(\xi^h_k)'\|_{L^{\infty}}\leq \|(\tilde
v^h_k)'\|_{L^{\infty}}\|(\zeta_1^h)'\|_{L^{\infty}}\leq
(1+n_0)\|r^h_{k}\|_{L^{\infty}}\leq Ch^{\alpha-2}.
\end{equation}
Combining (\ref{normainftyxih}) and (\ref{normainftyder}) we
obtain (\ref{limitazionenorme}).
To conclude the proof it remains to verify (\ref{convxih}). By (\ref{rh4}),
(\ref{y1hrot}), and (\ref{y1x1}) we have
\begin{eqnarray}
\hspace{-0.6 cm}\|\zeta^h_{n_0}\comp y^h_1-x_1\|_{L^2} &\hspace{-0.2
cm}=&\hspace{-0.2 cm} 
\|y^h_1-\tilde v^h_1\comp y^h_1-x_1\|_{L^2}
\nonumber
\\
&\hspace{-0.2 cm}\leq&\hspace{-0.2 cm}
\|y^h_1-\tilde v^h_1-x_1\|_{L^2}
+ \|\tilde v^h_1-\tilde v^h_1\comp y^h_1\|_{L^2}
\nonumber 
\\
&\hspace{-0.2 cm}\leq&\hspace{-0.2 cm} Ch^{\alpha-1} + \|(\tilde
v^h_1)'\|_{L^{\infty}}
\|y^h_1-x_1\|_{L^2}
\nonumber 
\\
\label{varzetan0}
&\hspace{-0.2 cm}\leq&\hspace{-0.2 cm} Ch^{\alpha-1}+
h^{2(\alpha-2)}\|r^h_{1}\|_{L^{\infty}}\leq
Ch^{\alpha-1}+Ch^{4(\alpha-2)}.
\end{eqnarray}
Arguing analogously for $\zeta^h_{n_0-1}$ and using (\ref{varzetan0}), we
obtain
\begin{eqnarray}
 \nonumber \|\zeta^h_{n_0-1}\comp y^h_1-x_1\|_{L^2}&\hspace{-0.2
cm}\leq&\hspace{-0.2 cm} \|y^h_1-x_1-\tilde v^h_1\|_{L^2}+\|\tilde
v^h_1-\tilde v^h_1\comp \zeta^h_{n_0}\comp y^h_1\|_{L^2}\\
 \nonumber &\hspace{-0.2 cm}\leq&\hspace{-0.2 cm} Ch^{\alpha-1}+\|(\tilde
v^h_1)'\|_{L^{\infty}}\|\zeta^h_{n_0}\comp y^h_1-x_1\|_{L^2}\\
 \nonumber &\hspace{-0.2 cm}\leq&\hspace{-0.2 cm}
Ch^{\alpha-1}+Ch^{2(\alpha-2)}(h^{\alpha-1}+h^{4(\alpha-2)})\\
 \label{varzn0-1} &\hspace{-0.2 cm}\leq&\hspace{-0.2 cm}
Ch^{\alpha-1}+Ch^{6(\alpha-2)}.
\end{eqnarray}
By induction we deduce
\begin{equation}\label{hpvarzetan}
\|\zeta^h_{n}\comp y^h_1-x_1\|_{L^2}\leq
Ch^{\alpha-1}+Ch^{2(n_0-n+2)(\alpha-2)}.
\end{equation}
In particular, we have
\begin{equation}\label{varzeta1}
\|\zeta^h_1\comp y^h_1-x_1\|_{L^2}\leq Ch^{\alpha-1}+Ch^{2
(n_0+1)(\alpha-2)}.
\end{equation}
We can now prove (\ref{convxih}). By (\ref{xihk}), (\ref{rh3})
and (\ref{varzeta1}) we obtain
\begin{eqnarray}
\nonumber\frac{1}{h}\|\xi^h_k\comp y^h_1-\tilde
v^h_k\|_{L^2}\hspace{-0.4 cm}&
&=\frac{1}{h}\|\tilde v^h_k\comp \zeta^h_1\comp y^h_1-\tilde
v^h_k\|_{L^2}\\
 \nonumber & &\leq \frac{1}{h}\|(\tilde
v^h_k)'\|_{L^{\infty}}\|\zeta^h_1\comp y^h_1-x_1\|_{L^2}\\
 \nonumber & &\leq
\frac{1}{h}\|r^h_{k}\|_{L^{\infty}}(Ch^{\alpha-1}+Ch^{2(n_0+1)(\alpha-2)})\\
 \nonumber & &\leq Ch^{\alpha-3}(Ch^{\alpha-1}+Ch^{2(n_0+1)(\alpha-2)})\\
 \nonumber & &\leq Ch^{\min\{2\alpha-4,(2n_0+3)\alpha-(4n_0+7)\}},
\end{eqnarray}
where the last term converges to zero because of \eqref{fixn0}. Combining this
with (\ref{y3hrot}), we deduce (\ref{convxih}).
\end{proof}

\section{Proof of the Main Result}

This section is entirely devoted to the proof of Theorem~\ref{psstball}. 
The proof strategy is similar to \cite{M-S}. The major difference is in the
analysis of the asymptotic behaviour of the first-order stress moments (Steps~6
and~7), where the approximating sequences constructed in Lemma~\ref{succappr} are
needed to define suitable test functions in the scalings $2<\alpha<3$.

\begin{proof}[Proof of Theorem~\ref{psstball}]
Let $(y^h)$ be a sequence of deformations in $W^{1,2}(\Omega,\R^3)$
satisfying the energy bound \eqref{energiapiccola2}, the boundary condition
\eqref{bdary}, and the Euler-Lagrange equations
\begin{equation}\label{2pstcresc}
\myintom{DW(\nablah y^h)(\nablah y^h)^T{\,:\,}[(\nabla
\phi)\comp y^h]}=\myintom{h^{\alpha}[f_2(\phi_2\comp y^h)+f_3(\phi_3\comp y^h)]}
\end{equation}
for every $\phi \in C^1_b(\R^3,\R^3)$ 
such that $\phi(0,hx_2,hx_3)=0$ for all $(x_2,x_3)\in S$.

Convergence of the sequences $(y^h)$, $(u^h)$, $(v^h_k)$, and
$(w^h)$ 
follows from Theorem~\ref{teoa}, together with the fact that 
$(u,v_2,v_3,w)\in \cal A_{\alpha}$. 
To conclude the proof we need to show that $(u,v_2,v_3,w)$ is a
stationary point of~$\cal J_{\alpha}$.

The proof is split into seven steps.
\medskip

\noindent
\textbf{Step 1.} \textit{Decomposition of the deformation gradients in rotation
and strain}
\smallskip

\noindent
Let $(R^h)$ be the approximating sequence of rotations constructed in
Theorem~\ref{rot} and let $A\in W^{1,2}((0,L),\mthree)$ be the function defined in \eqref{A-def}.
We introduce the strain $G^h:\Omega\to\mthree$ as 
\begin{equation}\label{dec}
\nablah y^h=R^h(Id+h^{\alpha-1}G^h).
\end{equation}
By \eqref{c2} the sequence $(G^h)$ is bounded in $L^2(\Omega,\mthree)$, 
so that there exists $G\in L^2(\Omega,\mthree)$
such that $G^h\wto G$ weakly in $L^2(\Omega,\mthree)$.
Moreover, by Lemma~\ref{symmpartG} (see the end of this section) the symmetric part of $G$ can be
characterized as follows: 
there exists $\beta \in L^2(\Omega,\R^3)$, with zero average on $S$
and $\partial_k\beta\in L^2(\Omega,\R^3)$ for $k=2,3$, such that, if we set 
$$
M(\beta):=\Big( x_2A'e_2+ x_3A'e_3\,\Big| \,\partial_2 \beta 
\, \Big| \, \partial_3 \beta\Big),
$$
we have
\begin{equation}\label{carG}
\sym\, G= 
\begin{cases}
\sym\, M(\beta)+(u'+\frac{1}{2}[(v_2')^2+(v_3')^2])e_1 \otimes e_1 & 
\text{if }\alpha=3,
\\
\sym\, M(\beta)+u' e_1 \otimes e_1 & 
\text{if }\alpha>3,
\\
\sym\, M(\beta)+g e_1 \otimes e_1 & 
\text{if }2<\alpha<3
\end{cases}
\end{equation}
for some $g \in L^2(0,L)$. In particular, by
the normalization hypotheses (\ref{ipsimm}) on $S$ we deduce
\begin{equation}\label{barG11}
\lin{{G}}_{11}=
\begin{cases}
u'+\frac{1}{2}[(v_2')^2+(v_3')^2]&\textrm{ for }\alpha=3,\\
u'&\textrm{ for }\alpha>3,\\
g &\textrm{ for }2<\alpha<3.
\end{cases}
\end{equation}
\medskip

\noindent
\textbf{Step 2.} \textit{Stress tensor estimate}
\smallskip

\noindent
We define the stress $E^h:\Omega\to\mthree$ as
\begin{equation}\label{defeh}
E^h=\frac{1}{h^{\alpha-1}}DW(Id+h^{\alpha-1}G^h)(Id+h^{\alpha-1}G^h)^T.
\end{equation}
{}From the frame indifference of $W$ it follows that
$$
DW(F)F^T=F(DW(F))^T \quad\text{for every }
F\in\mthree_+.
$$
This implies that $E^h$ is symmetric for every $h>0$. Moreover, the following
pointwise estimate holds:
\begin{equation}\label{stimaeh}
|E^h|\leq C \Big(\frac{W(Id+h^{\alpha-1}G^h)}{h^{\alpha-1}}+|G^h|\Big).
\end{equation}
Indeed, let $\delta$ be the width of the neighbourhood of $SO(3)$ where $W$ is
of class $C^2$. Suppose first that $h^{\alpha-1}|G^h|\leq \frac{\delta}{2}$. 
Then, a first order Taylor expansion of $DW$ around the identity, together with (H4) and (H5),
yields
$$
DW(Id+h^{\alpha-1}G^h)=h^{\alpha-1}D^2W(M^h)G^h
$$ 
for some $M^h\in\mthree$ satisfying $|M^h-Id|\leq \frac{\delta}{2}$.
Since $D^2W$ is bounded on the set $\{F\in\mthree:
\dist(F,SO(3))\leq\frac{\delta}{2}\}$, 
we deduce 
$$
|DW(Id+h^{\alpha-1}G^h)|\leq Ch^{\alpha-1}|G^h|.
$$
Therefore, by (\ref{defeh}) we obtain
$$
|E^h|\leq C|G^h|+Ch^{\alpha-1}|G^h|^2\leq C(1+\delta)|G^h|.
$$
If instead $h^{\alpha-1}|G^h|>\frac{\delta}{2}$, we first observe that 
$W(\nablah y^h)$ is finite a.e.\ in $\Omega$ by \eqref{energiapiccola2}. 
By (H2) and by frame indifference we deduce
$$
\det(\nablah y^h)=\det(Id+h^{\alpha-1}G^h)>0 \textrm{ a.e.\ in }\Omega.
$$
Therefore, we can use (H7), which yields 
$$
|E^h|\leq \frac{1}{h^{\alpha-1}}k(W(Id+h^{\alpha-1}G^h)+1)\leq k
\frac{W(Id+h^{\alpha-1}G^h)}{h^{\alpha-1}}+\frac{2k}{\delta}|G^h|.
$$
This completes the proof of (\ref{stimaeh}).
\medskip

\noindent
\textbf{Step 3.} \textit{Convergence properties of the scaled stress}
\smallskip

\noindent
Arguing as in \cite{M-S}, 
some convergence properties of the stresses $E^h$ can be deduced from (\ref{stimaeh}). 
Indeed, using \eqref{energiapiccola2} and the fact that the $G^h$ are bounded
in $L^2(\Omega,\mthree)$, we obtain  
from (\ref{stimaeh}) that for each measurable set $\Lambda$ the following
estimate holds true:
\begin{equation}\label{inslambda}
\int_{\Lambda}{|E^h|dx}\leq Ch^{\alpha-1}+C|\Lambda|^{\frac{1}{2}},
\end{equation}
where $|\Lambda|$ denotes the Lebesgue measure of $\Lambda$.
Let now 
\begin{equation}\label{defbh}
B_h:=\{x \in \Omega: \ h^{\alpha-1-\gamma}|G^h(x)|\leq 1\},
\end{equation}
where $\gamma \in (0,\alpha-2)$, and let 
$\chi_h$ be the characteristic function of $B_h$. By \eqref{inslambda} and
by Chebyshev inequality we have
\begin{equation}\label{omegamenbheh}
\int_{\Omega \setminus B_h}{|E^h|dx}\leq Ch^{\alpha-1-\gamma},
\end{equation}
so that 
\begin{equation}\label{1menchih}
(1-\chi_h)E^h\to0 \quad\text{strongly in }L^1(\Omega,\mthree).
\end{equation}
Moreover, one can show that the remainder in the first order Taylor expansion of
$DW(Id+h^{\alpha-1}G^h)$ around the identity is uniformly controlled on the sets
$B^h$, so that 
\begin{equation}\label{chiheh}
\chi_hE^h \deb \cal LG=:E \quad\text{in }L^2(\Omega,\mthree)
\end{equation}
(see Step~3 in the proof of \cite[Theorem~3.1]{M-S} for details).
\medskip

\noindent
\textbf{Step 4.} \textit{Consequences of the Euler-Lagrange equations}
\smallskip

\noindent
By the frame indifference of $W$ and by \eqref{dec} we have
$$
DW(\nablah y^h)(\nablah y^h)^T=h^{\alpha-1}R^hE^h(R^h)^T.
$$
Therefore, the Euler-Lagrange equations \eqref{2pstcresc} can be written as 
\begin{equation}\label{psstcresc}
\myintom{R^hE^h(R^h)^T{\,:\,}[(\nabla \phi)\comp y^h]}
=h\myintom{[f_2(\phi_2\comp y^h)+f_3(\phi_3\comp y^h)]}
\end{equation}
for every $\phi \in C^1_b(\R^3,\R^3)$ satisfying the boundary
condition $\phi(0,hx_2,hx_3)=0$ for all $(x_2,x_3)\in S$.

Let now $\phi$ be a function in $C^1_b(\R^3,\R^3)$ such that 
$\phi(0,x_2,x_3)=0$ for every $(x_2,x_3)\in S$. For each $h>0$ we define
$$
\phi^h(x):=h\phi\Big(x_1,\frac{x_2}{h}-\frac{1}{h}\xi^h_2(x_1),\frac{x_3}{h}-\frac{1}{h}\xi^h_3(x_1)\Big),
$$
where $\xi^h_2$, $\xi^h_3$ are the functions constructed in Lemma~\ref{succappr}.  
By (\ref{bordoxih}) the maps $\phi^h$ are admissible test functions in (\ref{psstcresc}).

To simplify computations we introduce the following notation: 
\begin{equation}
\label{szh}
z^h:=\Big(y^h_1,\frac{y^h_2}{h}-\xi^h_2\comp y^h_1,\frac{y^h_3}{h}-\frac{1}{h}\xi^h_3\comp y^h_1\Big).
\end{equation}
{}From \eqref{yhconv} and (\ref{convxih}) it follows that
\begin{equation}\label{convi}
z^h\to x \quad \textrm{ in }L^2(\Omega,\R^3).
\end{equation}
Choosing $\phi^h$ as test function in (\ref{psstcresc}) we obtain
\begin{multline}\label{sost1}
\myintom{R^hE^h(R^h)^Te_1 {\,\cdot\,} \Big[h\partial_1 \phi\comp z^h
-\sum_{k=2}^3{(\partial_k\phi\comp z^h) \big((\xi^h_k)'\comp y^h_1\big)}\Big]}
\\ 
{}+\myintom{\sum_{k=2}^{3} R^hE^h(R^h)^Te_k {\,\cdot\,} (\partial_k \phi\comp z^h)}+\myintom{h^2\big[f_2(\phi_2\comp z^h)+f_3(\phi_3\comp z^h)\big]}=0.
\end{multline}
By \eqref{inslambda} and \eqref{limitazionenorme} we have
\begin{multline}\nonumber
\bigg|\myintom{R^hE^h(R^h)^Te_1 {\,\cdot\,} \Big[h\partial_1 \phi\comp z^h-
\sum_{k=2}^3{(\partial_k\phi\comp z^h)\big((\xi^h_k)'\comp y^h_1\big)}\Big]}\bigg|
\\
\leq C\|E^h\|_{L^1}\Big(
\|h\partial_1\phi\|_{L^{\infty}}+\sum_{k=2}^3\|\partial_k\phi\|_{L^{\infty}}\|(\xi^h_k)'\|_{L^{\infty}}
\Big)\leq C(h+h^{\alpha-2}),
\end{multline}
therefore the first integral in \eqref{sost1} converges to zero. Analogously, 
since $f_k\in L^2(0,L)$ for $k=2,3$ and $\phi_k\in C^1_b(\R)$, 
the last integral in \eqref{sost1} tends to zero. 
We deduce that the second integral in (\ref{sost1}) must also converge to zero. 
On the other hand, this term can be written as 
\begin{eqnarray}
\nonumber 
\lefteqn{\myintom{\sum_{k=2}^{3}{R^hE^h(R^h)^Te_k {\,\cdot\,} (\partial_k
\phi\comp z^h)}}}
\\
& = &  \myintom{\sum_{k=2}^{3}{\chi_hR^hE^h(R^h)^Te_k {\,\cdot\,} (\partial_k
\phi\comp z^h)}} \nonumber
\\
& &{}+\myintom{\sum_{k=2}^{3}{(1-\chi_h)R^hE^h(R^h)^Te_k {\,\cdot\,}
(\partial_k \phi\comp z^h)}}.
\label{3terms} 
\end{eqnarray}
By (\ref{convi}) and by the dominated convergence theorem we have 
\begin{equation}\label{convcosevarie}
\partial_k \phi\comp z^h\to\partial_k\phi\quad\textrm{ in }L^2(\Omega).
\end{equation}
Thus, by (\ref{chiheh}) and  by the fact that $R^h\to Id$ in $L^{\infty}(0,L)$
we deduce
$$ 
\myintom{\sum_{k=2}^{3}{\chi_hR^hE^h(R^h)^Te_k {\,\cdot\,} (\partial_k
\phi\comp z^h)}}\to
 \displaystyle\displaystyle\myintom{\sum_{k=2}^{3}Ee_k{\,\cdot\,}\partial_k
\phi},
$$
while by (\ref{1menchih}) we have that the last term in \eqref{3terms} tends to zero.
We conclude that
\begin{equation}\label{seconda}
\myintom{\sum_{k=2}^{3}Ee_k{\,\cdot\,}\partial_k \phi}=0 
\end{equation}
for every $\phi \in C^1_b(\R^3,\R^3)$ such that $\phi(0,x_2,x_3)=0$ 
for all $(x_2,x_3)\in S$. Therefore, the following equations hold true a.e.\ in $(0,L)$:
\begin{equation}\label{div}
\begin{cases}
\div_{x_2,x_3}(Ee_2|Ee_3)=0 & \text{ in } S,
\smallskip \\
(Ee_2|Ee_3)\nu_{\partial S}=0 & \text{ on }\partial S,
\end{cases}
\end{equation}
where $\nu_{\partial S}$ is the unit normal to $\partial S$. Moreover,
for a.e.\ $x_1\in (0,L)$
\begin{equation}\label{eek}
\myintegra{Ee_k}=0\quad \textrm{ for } k=2,3. 
\end{equation}
We conclude that $\lin{E}e_2=\lin{E}e_3=0$ a.e.\ in $(0,L)$ and
since $E$ is symmetric, 
$$
\lin{E}=\lin{E}_{11}e_1 \otimes e_1.
$$
\medskip

\noindent
\textbf{Step 5.} \textit{Zeroth moment of the Euler-Lagrange equations}
\smallskip

\noindent
We now identify the zeroth order moment of the limit stress $E$. Let
$\psi$ be a function
in $C^1_b(\R)$ such that $\psi(0)=0$. We define
$$
\phi(x)=\psi(x_1)e_1.
$$
Using $\phi$ as a test function in the Euler-Lagrange equations
\eqref{psstcresc} we have
\begin{equation}\label{serveallafine}
\myintom{(R^h E^h (R^h)^T)_{11}(\psi'\comp y_1^h)}=0.
\end{equation}
To pass to the limit in the previous equation, we split $\Omega$ into the sets 
$B_h$ and $\Omega\setminus B_h$, so that we obtain
\begin{equation}\label{sommaint}
\myintom{\chi_h(R^hE^h(R^h)^T)_{11}(\psi'\comp y_1^h)}+\myintom{
(1-\chi_h)(R^hE^h(R^h)^T)_{11}(\psi'\comp y_1^h)}=0.
\end{equation}
By \eqref{yhconv} and by the continuity of $\psi'$ it follows that 
$\psi'\comp y_1^h$ converges to $\psi'$ in $L^2(\Omega)$. Therefore,
by (\ref{1menchih}) and (\ref{chiheh}) we can pass to the limit in (\ref{sommaint})
and we deduce
$$
\myintol{\lin{E}_{11}\psi'}=\myintom{E_{11}\psi'}=0
$$
for every $\psi \in C^1_b(\R)$ such that $\psi(0)=0$. 
This implies that $\lin E=\lin{E}_{11}e_1\otimes e_1=0$ a.e.\ in $(0,L)$.

Since by frame indifference $\cal L H=0$ for every skew-symmetric $H\in\mthree$, we obtain that $\cal L\,\lin{\sym\, G}=\cal L
\lin{G}=\lin{E}=0$. The invertibility of $\cal L$ on the space of symmetric matrices
yields that $\lin{\sym\, G}=0$. 
Together with \eqref{barG11}, this implies
(\ref{eq1a}) for $\alpha=3$, (\ref{eq1b}) for $\alpha>3$,
and $g=0$ a.e.\ in $(0,L)$ for $2<\alpha<3$.
Moreover, by \eqref{carG} we deduce that
$$
\sym\Big( 0 \, \Big| \int_S\partial_2 \beta\, dx_2dx_3\, \Big| \, \int_S
\partial_3 \beta\, dx_2dx_3\Big)=0,
$$
so that, if we introduce $\tilde\beta:\Omega\to\R^3$ defined by
$$
\tilde\beta:=\Big(\beta_1, \beta_2-x_3\int_S\partial_3\beta_2\,dx_2dx_3, 
\beta_3-x_2\int_S\partial_2\beta_3\,dx_2dx_3\big),
$$
we have that  $\tilde \beta(x_1,\cdot,\cdot)\in \cal B$ for a.e.\ $x_1\in (0,L)$ and
$$
\sym\, G= \sym\Big( x_2A'e_2 + x_3A'e_3\, \Big|
\, \partial_2 \tilde\beta \, \Big| \, \partial_3 \tilde\beta\Big).
$$
In particular, we have the following characterization of $E$:
$$
E=\cal L\,\sym\, G= \cal L\Big( x_2A'e_2 + x_3A'e_3\, \Big|
\, \partial_2 \tilde\beta \, \Big| \, \partial_3 \tilde\beta\Big).
$$ 
Since $E$ satisfies \eqref{div}, we deduce from Lemma~\ref{lem3} that
$\tilde \beta$ is a minimizer of the functional 
$$
\cal G_{A'}(\beta)=\myintegra{Q_3\Big( x_2A'e_2 + x_3A'e_3\, \Big|
\, \partial_2 \beta \, \Big| \, \partial_3 \beta\Big)}.
$$
In other words, $\tilde \beta$ satisfies 
\begin{equation}
Q_1(A')=\myintegra{Q_3\Big( x_2A'e_2 + x_3A'e_3\, \Big|
\, \partial_2 \tilde\beta \, \Big| \, \partial_3 \tilde\beta\Big)}
\end{equation}
for all $\alpha>2$.
\medskip

\noindent
\textbf{Step 6.} \textit{First-order moments of the Euler-Lagrange equations}
\smallskip

\noindent
In this step we prove that the limiting Euler-Lagrange equations (\ref{eq2a})
and (\ref{eq2b}) are satisfied. Let $\varphi_2,\varphi_3$ be
two functions in $C^1_b(\R)$ with $\varphi_2(0)=\varphi_3(0)=0$. We
define
$$
\phi^h(x)=\Big(0,\frac{\varphi_2(x_1)}{h},\frac{\varphi_3(x_1)}{h}\Big)
$$
and we use $\phi^h$ as test function in (\ref{psstcresc}).
By \eqref{yhconv} the force term can be treated as follows:
\begin{eqnarray}
 \nonumber\lim_{h\to0}{\myintom{h[f_2 (\phi_2^h\comp y^h)+f_3
(\phi_3^h\comp y^h)]}}&\hspace{-0.2 cm}=\hspace{-0.2 cm}&
\lim_{h\to0}{\myintom{[f_2 (\varphi_2 \comp y^h_1)+f_3 (\varphi_3\comp
y_1^h)]}}\\
&\hspace{-0.2 cm}=\hspace{-0.2 cm}&\myintol{(f_2 \varphi_2+f_3\varphi_3)}.
\end{eqnarray}
Therefore, we have
\begin{eqnarray}
\nonumber &
\displaystyle\lim_{h\to0}{\myintom{\Big[(R^hE^h(R^h)^T)_{21}\frac{
\varphi_2'\comp y^h_1}{h}
+(R^hE^h(R^h)^T)_{31}\frac{\varphi_3'\comp y^h_1}{h}\Big]}}&\\
\label{limboh} &\displaystyle=\myintol{(f_2
\varphi_2+f_3 \varphi_3)}.&
\end{eqnarray}
We shall characterize the limit on the left-handside of \eqref{limboh} in terms of the
first-order moments of the stress $E$. To this aim, we go back to the Euler-Lagrange 
equations (\ref{psstcresc}) and we construct some ad-hoc
test functions with a linear behaviour in the variables $x_2$, $x_3$. 
Let $(\omega_h)$ be a sequence of positive numbers such that
\begin{eqnarray}
\label{oh1}
& & h \omega_h\to +\infty,\\
\label{oh2}
& & h^{\alpha-1-\gamma}\omega_h\to0,
\end{eqnarray}
where $\gamma \in (0,\alpha-2)$ is the same exponent introduced in
(\ref{defbh}). 
{}For each $h>0$ we consider a function $\theta^h \in C^1_b(\R)$ which
coincides with the identity in a large enough neighbourhood of the origin, that is,
\begin{equation}\label{th1}
\theta^h(t)=t\quad \textrm{ for }|t|\leq \omega_h
\end{equation}
and, in addition, satisfies the following properties:
\begin{eqnarray}
\label{th2}
&|\theta^h(t)|&\leq |t|\quad \forall t \in\R,\\
\label{th3}
&\|\theta^h\|_{L^{\infty}}&\leq 2 \omega_h,\\
\label{th4}
&\Big\|\dfrac{d \theta^h}{dt}\Big\|_{L^{\infty}}\hspace{-0.1 cm}&\leq 2.
\end{eqnarray}
Let $\eta$ be a function in $C^1(\R)$ with compact support and such that $\eta(0)=0$, 
and let $\xi^h_k$, $k=2,3$, be the functions constructed in Lemma~\ref{succappr}.
We consider the map
$$
\phi^h(x)=\theta^h\Big(\frac{x_3}{h}-\frac{1}{h}\xi^h_3(x_1)\Big)\eta(x_1)e_1.
$$
Choosing $\phi^h$ as test function in (\ref{psstcresc}) and using the
notation introduced in \eqref{szh}, we obtain
\begin{eqnarray}
\nonumber
&& \myintom{(R^hE^h(R^h)^T)_{11}(\theta^h \comp z^h_3)(\eta'\comp
y^h_1)}\\
\nonumber &\hspace{-0.2 cm}-\hspace{-0.2 cm}&\myintom
{\frac { (R^hE^h(R^h)^T)_{11} } {
h}\Big(\frac{d\theta^h}{dt}\comp z^h_3\Big)[(\xi^h_3)'\comp y^h_1](\eta\comp
y^h_1)}\\
\label{sost2} &\hspace{-0.2
cm}+\hspace{-0.2 cm}&\myintom{(R^hE^h(R^h)^T)_{13}\frac{\eta\comp
y^h_1}{h}\Big(\frac{d\theta^h}{dt}
\comp z^h_3\Big)}=0.
\end{eqnarray}
The first integral in \eqref{sost2} can be decomposed into the sum of two terms
\begin{eqnarray}
\nonumber &\displaystyle
\myintom{(R^hE^h(R^h)^T)_{11}(\theta^h\comp
z^h_3)(\eta'\comp y^h_1)}&\\
\nonumber &\hspace{-1.1 cm}=&\hspace{-3.6 cm}\myintom {
\chi_h[(R^hE^h(R^h)^T)_{11}
(\theta^h\comp z^h_3)(\eta'\comp y^h_1)]}\\ 
 \label{ssost2}
&\hspace{-1.1 cm}+&\hspace{-3.6 cm}\myintom{(1-\chi_h)\Big[(R^hE^h(R^h)^T)_{11}
(\theta^h\comp z^h_3)(\eta'\comp y^h_1)\Big]}.
\end{eqnarray}
By \eqref{yhconv}, \eqref{convi}, (\ref{th2}), and by the dominated
convergence theorem we deduce that
$$
(\theta^h\comp z^h_3)(\eta'\comp y^h_1)\to x_3\eta' \textrm{ in
}L^2(\Omega).
$$
Therefore, by \eqref{chiheh} we have
$$ 
\lim_{h\to0}{\myintom{\chi_h(R^hE^h(R^h)^T)_{11}(\eta'\comp y^h_1)(\theta^h \comp
z^h_3)}}
=\myintom{x_3 E_{11}\eta'}=\myintol{\capp{E}_{11}\eta'}.
$$
The second term in \eqref{ssost2} can be estimated using (\ref{omegamenbheh}),
as follows:
\begin{eqnarray}
 \nonumber\myintom{(1-\chi_h)|(R^hE^h(R^h)^T)_{11}
(\eta'\comp y^h_1)(\theta^h\comp z^h_3)| }
&\hspace{-0.2 cm}\leq\hspace{-0.2 cm}& 2
\omega_h\|\eta'\|_{L^\infty(0,L)}\int_{\Omega \setminus
B_h}{|E^h|}\\
\nonumber &\hspace{-0.2 cm}\leq\hspace{-0.2 cm}&\ Ch^{\alpha-1-\gamma}\omega_h,
\end{eqnarray}
and this latter is infinitesimal owing to (\ref{oh2}).
We conclude that
\begin{equation}\label{concl}
 \myintom{(R^hE^h(R^h)^T)_{11}(\theta^h\comp z^h_3)(\eta'\comp y^h_1)} \to
\myintol{\capp{E}_{11}\eta'}.
\end{equation} 
As for the second integral in \eqref{sost2}, we consider the following
decomposition:
\begin{eqnarray}
\nonumber
& \displaystyle\myintom
{(R^hE^h(R^h)^T)_{11} \frac{1}{h}\Big(\frac{d\theta^h}{dt}\comp
z^h_3\Big)[(\xi^h_3)'\comp y^h_1](\eta \comp y^h_1)}& \\
&\hspace{-4 cm}=&\hspace{-6 cm}\myintom {
(R^hE^h(R^h)^T)_{11}\Big[\Big(\frac{d\theta^h}{dt}
\comp z^h_3\Big)-1\Big]\frac{1}{h}[(\xi^h_3)'\comp y^h_1](\eta\comp y^h_1)}
\nonumber
\\
\label{dec2}&\hspace{-4 cm}+&\hspace{-6
cm}\myintom{(R^hE^h(R^h)^T)_{11}\frac{1}{h}[ (\xi^h_3)'\comp y^h_1](\eta
\comp y^h_1) } .
\end{eqnarray}
To study the first term in \eqref{dec2} we introduce the sets 
\begin{equation}\label{tilDh}
D_h=\{x \in \Omega : |z^h_3(x)|\geq \omega_h\}.
\end{equation}
Since $(z^h_3)$ is uniformly bounded in $L^2(\Omega)$, by Chebyshev
inequality we deduce that
\begin{equation}\label{normaDh}
|D_h|\leq C \omega_h^{-2}.
\end{equation}
Thus, by (\ref{limitazionenorme}) and (\ref{inslambda}) we have
\begin{align}
\Big|\int_\Omega &(R^hE^h(R^h)^T)_{11}\Big[\Big(\frac{d\theta^h}{dt}\comp
z^h_3\Big)-1\Big]\frac{1}{h}[(\xi^h_3)'\comp y^h_1](\eta\comp y^h_1)\, dx
\Big|
\nonumber
\\
& \leq \int_{D_h}{\Big|(R^hE^h(R^h)^T)_{11}\Big[\Big(\frac{d\theta^h
}{dt}\comp z^h_3\Big)-1\Big]\frac{1}{h}[(\xi^h_3)'\comp
y^h_1](\eta\comp y^h_1)\Big|}
\nonumber
\\
&\label{thus} \leq  Ch^{\alpha-3}\int_{D_h}{|E^h|}\leq
Ch^{\alpha-3}(h^{\alpha-1}+|D_h|^{\frac{1}{2}})\leq
C\Big(h^{2\alpha-4}+\frac{h^{\alpha-2}}{h\omega_h}\Big),
\end{align}
where the latter term tends to zero owing to (\ref{oh1}). Furthermore, we can
prove that the second term in \eqref{dec2} is equal to
zero. Indeed, let
$$
\psi^h(x_1):=\int_0^{x_1}{\frac{1}{h}(\xi^h_3)'(s)\eta(s)ds}.
$$
It is easy to verify that $\psi^h\in C^1_b(\R)$ and $\psi^h(0)=0$ for
every $h>0$. Therefore, by \eqref{serveallafine} we obtain 
$$
\myintom{(R^hE^h(R^h)^T)_{11}\frac{1}{h} [(\xi^h_3)'\comp y^h_1](\eta
\comp y^h_1) }=0.
$$
By \eqref{dec2} and \eqref{thus} we conclude that
\begin{equation}\label{ten0}
\myintom{ (R^hE^h(R^h)^T)_{11}\frac{1}{h}\Big(\frac{d\theta^h}{dt}\comp
z^h_3\Big)[(\xi^h_3)'\comp y^h_1](\eta \comp y^h_1)}\to0.
\end{equation} 
It remains to study the third integral in \eqref{sost2}, which can be
written as
\begin{eqnarray}
 &\displaystyle\nonumber\myintom{(R^hE^h(R^h)^T)_{13}\frac{\eta \comp
y^h_1}{h}\Big(\frac{
d\theta^h } { dt }
\comp z^h_3 \Big)}&\\
\nonumber&\hspace{-3.6
cm}=&\hspace{-4.8 cm}\myintom{(R^hE^h(R^h)^T)_{13}\frac{\eta \comp y^h_1}{h}
\Big[\Big(\frac { d\theta^h}{dt}\comp z^h_3\Big)-1\Big]}\\
 \label{nevap}&\hspace{-3.6
cm}+&\hspace{-4.8 cm}\myintom{(R^hE^h(R^h)^T)_{13}\frac{\eta \comp y^h_1}{h}}.
\end{eqnarray}
We claim that
\begin{equation}\label{altraclaim}
\lim_{h\to0}{\myintom{(R^hE^h(R^h)^T)_{13}\frac{\eta
\comp y^h_1}{h}\Big[\Big(\frac{d\theta^h}{dt}
\comp z^h_3 \Big)-1\Big]}}=0.
\end{equation}
To prove it, we consider again the sets $D_h$ defined in (\ref{tilDh}). 
{}From (\ref{inslambda}), (\ref{th4}), (\ref{normaDh}),
and from the boundedness of $\eta$ we obtain
\begin{eqnarray*}
\lefteqn{\myintom{\Big|(R^hE^h(R^h)^T)_{13}\frac{\eta \comp
y^h_1}{h}\Big[\Big(\frac{ d\theta^h
}{dt}\comp z^h_3\Big)-1\Big]\Big|}}
\\
& = &\int_{D_h}{\Big|(R^hE^h(R^h)^T)_{13}\frac{\eta \comp
y^h_1}{h}\Big[\Big(\frac{
d\theta^h}{dt}\comp z^h_3\Big)-1\Big]\Big|}
\\
& \leq & \frac{C}{h}\int_{D_h}{|E^h|}\leq \frac{C}{h}(h^{\alpha-1}+|D_h|^{\frac{1}{2}})\leq C\Big(h^{\alpha-2}+\frac{1}{h\omega_h}\Big)
\end{eqnarray*}
and the latter is infinitesimal owing to (\ref{oh1}), so that
(\ref{altraclaim}) follows.
In conclusion, combining (\ref{sost2}), \eqref{concl}, and \eqref{ten0}--\eqref{altraclaim} we
deduce that
\begin{equation}\label{sost2meglio}
\lim_{h\to0}\myintom{(R^hE^h(R^h)^T)_{13}\frac{\eta
\comp y^h_1}{h}}=-\myintom{\capp{E}_{11} \eta'}
\end{equation}
for every $\eta \in C^1(\R)$ with compact support and such that $\eta(0)=0$. Choosing a test
function of the form 
$$
\phi^h(x)=\theta^h\Big(\frac{x_2}{h}-\frac{1}{h}
\xi^h_2(x_1)\Big)\eta(x_1)e_1,
$$
one can prove analogously that
\begin{equation}\label{sost2megliob}
 \lim_{h\to0}\myintom{(R^hE^h(R^h)^T)_{12}\frac{\eta
\comp y^h_1}{h}}=-\myintom{\til{E}_{11} \eta'}.
\end{equation}
Let now $\varphi_k \in C^2(\R)$ with compact support be such that
$\varphi_k(0)=\varphi_k'(0)=0$
for $k=2,3$. We choose $\eta=\varphi'_3$ in \eqref{sost2meglio} and
$\eta=\varphi'_2$ in \eqref{sost2megliob} and we add the two equations. 
Comparing with \eqref{limboh} and using the fact that 
$E^h$ (and therefore, $R^hE^h(R^h)^T$) is symmetric, we conclude that
$$
\myintol{\til{E}_{11}\varphi_2''+\capp{E}_{11}
\varphi_3''+f_2\varphi_2+f_3\varphi_3}=0
$$
for every $\varphi_k \in C^2(\R)$  with compact support and such that
$\varphi_k(0)=\varphi_k'(0)=0,$ $k=2,3$. By approximation we obtain
(\ref{eq2a}) and (\ref{eq2b}) for all $\alpha>2$.
\medskip

\noindent
\textbf{Step 7.} \textit{Euler-Lagrange equation for the twist function}
\smallskip

\noindent
To conclude the proof of the theorem, it remains to verify the 
limiting Euler-Lagrange equation \eqref{eq3}. 
We define
$$
\phi^h(x)=\Big(0,-\theta^h\Big(\frac{x_3}{h}-\frac{\xi^h_3(x_1)}{h}\Big)\eta(x_1),\theta^h\Big(\frac{x_2}{h}-\frac{\xi^h_2(x_1)}{h}\Big)\eta(x_1)\Big),
$$
where $\eta \in C^1(\R)$ with compact support, $\eta(0)=0$, and $\theta^h$ is as in Step~6.
Using $\phi^h$ as test function in the Euler-Lagrange equations
(\ref{psstcresc}), we obtain
\begin{eqnarray}
\lefteqn{{}- \myintom{\Big[(R^hE^h(R^h)^T)_{21}(\theta^h\comp z^h_3)-
(R^hE^h(R^h)^T)_{31}(\theta^h\comp z^h_2)\Big]
(\eta'\comp y_1^h)
}}\nonumber
\\
& &
{}+\myintom{(R^hE^h(R^h)^T)_{21}\Big(\frac{d\theta^h}{dt}\comp z^h_3\Big)((\xi^h_3)'\comp y^h_1)\frac{\eta\comp y^h_1}{h}}\nonumber
\\
& &{}-\myintom{(R^hE^h(R^h)^T)_{31}\Big(\frac{d\theta^h}{dt}\comp z^h_2\Big)((\xi^h_2)'\comp y^h_1) 
\frac{\eta\comp y^h_1}{h}
}\nonumber
\\
& &{}+\myintom{\Big[(R^hE^h(R^h)^T)_{32}\Big(\frac{d\theta^h}{dt}\comp
z^h_2\Big)
-(R^hE^h(R^h)^T)_{23}\Big(\frac{d\theta^h}{dt}\comp
z^h_3\Big)\Big]\frac{\eta\comp y^h_1}{h}}\nonumber
\\
& & {}+h \myintom{\big[f_2(\theta^h\comp z^h_3)-f_3
(\theta^h\comp z^h_2)\big](\eta\comp y^h_1)}=0. \label{pstcresc3}
\end{eqnarray}
Arguing as in the proof of \eqref{concl}, we can show that the first integral
in \eqref{pstcresc3} satisfies
\begin{eqnarray}
\nonumber&\displaystyle\lim_{h\to0}{\displaystyle\myintom{\Big[(R^hE^h(R^h)^T)_{21}(\theta^h\comp z^h_3)-
(R^hE^h(R^h)^T)_{31}(\theta^h\comp z^h_2)\Big]
(\eta'\comp y_1^h)
}}&
\\\nonumber&=&\hspace{-5.8 cm}\myintol{(-\capp{E}_{12}+ \til{E}_{13})\eta'}.
\end{eqnarray}
The proof of \eqref{eq3} is concluded if we show that all other terms
in \eqref{pstcresc3} converge to zero, as $h\to0$. 
The last integral in \eqref{pstcresc3} is infinitesimal owing to the estimate 
\begin{eqnarray}
\nonumber\Big|h \myintom{[f_2
(\theta^h \comp z^h_3)-f_3 (\theta^h \comp z^h_2)](\eta \comp y^h_1)}\Big|&\hspace{-0.2
cm}\leq&\hspace{-0.2 cm}
Ch(\|f_2\|_{L^2}\|z^h_3\|_{L^2}+\|f_3\|_{L^2}\|z^h_2\|_{L^2})\\
\nonumber &\hspace{-0.2 cm}
\leq&\hspace{-0.2 cm}
Ch,
\end{eqnarray}
which follows from (\ref{th2}) and (\ref{convi}).

As for the term
$$
\myintom{\Big[(R^hE^h(R^h)^T)_{32}\Big(\frac{d\theta^h}{dt}\comp
z^h_2\Big)
-(R^hE^h(R^h)^T)_{23}\Big(\frac{d\theta^h}{dt}\comp
z^h_3\Big)\Big]\frac{\eta\comp y^h_1}{h}},
$$ 
we remark that by the
symmetry of $R^hE^h(R^h)^T$ it can be written as
$$
\myintom{\frac{\eta \comp y^h_1}{h}(R^hE^h(R^h)^T)_{32}\Big\{
\Big[\Big(\frac{d\theta^h}{dt}\comp
z^h_2\Big)-1\Big]+\Big[1-\Big(\frac{d\theta^h}{dt}
\comp z^h_3\Big)\Big]\Big\}}.
$$ 
Arguing as in the proof of (\ref{altraclaim}), we obtain
that the above expression tends to zero, as $h\to0$.

It remains to prove that
$$
\lim_{h\to0}\myintom{\frac{1}{h}(R^hE^h(R^h)^T)_{k1}\Big(\frac{d\theta^h}{dt}
\comp z^h_j\Big)[(\xi^h_j)'\comp y^h_1](\eta \comp y^h_1)}=0
$$
for $k,j\in\{2,3\}$, $k\neq j$.
To this aim, we fix $k=2,j=3$ and we write the previous integral as the
sum of two terms
\begin{eqnarray}
\nonumber  & &\myintom{\frac{1}{h}(R^hE^h(R^h)^T)_{21}\Big(\frac{d\theta^h}{dt}
\comp z^h_3\Big)[(\xi^h_3)'\comp y^h_1](\eta \comp y^h_1)}\\
&
&\nonumber=\myintom{\frac{1}{h}(R^hE^h(R^h)^T)_{21}\Big[\Big(\frac{d\theta^h}{
dt}\comp z^h_3\Big)-1\Big][(\xi^h_3)'\comp y^h_1](\eta \comp y^h_1)}\\
& &\label{2termsb}+\myintom{\frac{
(R^hE^h(R^h)^T)_ {21}}{h^{1-\epsilon}}\frac{
(\xi^h_3)'\comp y^h_1}{h^{\epsilon}}(\eta \comp y^h_1)}.
\end{eqnarray}
where $0<\epsilon<\alpha-2$.
Arguing as in the proof of (\ref{altraclaim}), we obtain that the
first term is infinitesimal. To study the second term, we notice that, if 
$(\psi^h)\subset C^1_b(\R)$ is a sequence of functions such that $\psi^h(0)=0$ and
$\|\psi^h\|_{L^{\infty}(\R)}\leq C$ for all $h>0$, then the map $\psi^h(x_1)e_j$
can be used as test function in the Euler-Lagrange
equations (\ref{psstcresc}) for every $h>0$, and we have
\begin{equation}\label{last}
\Big|\myintom{\frac{(R^hE^h(R^h)^T)_{j1}}{h^{1-\epsilon}}[(\psi^h)'\comp
y^h_1]} \Big|=\Big|\myintom{h^{\epsilon}f_j(\psi^h \comp y^h_1)}\Big|\leq
Ch^{\epsilon}\|f_j\|_{L^2(\Omega)}\to0.
\end{equation}
If we now choose
$$
\psi^h(x_1):=\int_0^{x_1}{\frac{(\xi^h_k)'(s)}{h^{\epsilon}}\eta(s)ds},
$$
then by \eqref{limitazionenorme} we obtain
$$
\|\psi^h\|_{L^{\infty}}\leq Ch^{\alpha-2-\epsilon}\|\eta\|_{L^1}\leq C \textrm{
for all }h>0,
$$
so that by \eqref{last} also the last term in \eqref{pstcresc3}
is infinitesimal as $h\to0$. This concludes the proof of \eqref{eq3} and of
the theorem.
\end{proof}
 
We conclude the section with a lemma, which provides us with a characterization
of the limiting strain. This result is contained in the proof
of \cite[Theorems~4.3 and~4.4]{S}. We present here a concise proof for the reader's
convenience.

\begin{lem}\label{symmpartG}
Let all the assumptions of Theorem~\ref{teoa} be satisfied and let $(R^h)$ be
the sequence of rotations of Theorem~\ref{rot}. 
For every $h>0$ let $G^h:\Omega\to\mthree$ be defined by
$$
G^h=\frac{(R^h)^T\nablah y^h-Id}{h^{\alpha-1}}.
$$ 
and let $G$ be the weak limit of $(G^h)$ in $L^2(\Omega,\mthree)$ 
(which exists, up to subsequences, by (\ref{c2})).
Then, there exist $g\in L^2(0,L)$ and $\beta\in L^2(\Omega,\R^3)$, 
with zero average on $S$ and $\partial_k\beta\in L^2(\Omega,\R^3)$ for $k=2,3$, such that, if we define 
$$
M(\beta):=\Big(x_2A'e_2+x_3A'e_3\,\Big|\,
\partial_2 \beta \, \Big|\, \partial_3 \beta\Big),
$$
we have
\begin{equation}\label{charGG}
\sym\, G= 
\begin{cases}
\sym\, M(\beta)+(u'+\frac{1}{2}[(v_2')^2+(v_3')^2])e_1 \otimes e_1 & 
\text{if }\alpha=3,
\\
\sym\, M(\beta)+u' e_1 \otimes e_1 & 
\text{if }\alpha>3,
\\
\sym\, M(\beta)+g e_1 \otimes e_1 & 
\text{if }2<\alpha<3,
\end{cases}
\end{equation}
where $u$, $v_k$, and $A$ are the functions introduced in Theorem~\ref{teoa}.
\end{lem}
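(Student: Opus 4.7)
The plan is to start from the identity
$$
\sym G^h = \frac{(\nabla_h y^h)^T \nabla_h y^h - Id}{2h^{\alpha-1}} - \frac{h^{\alpha-1}}{2}(G^h)^T G^h,
$$
obtained by applying $R^h \in SO(3)$ to $(R^h)^T \nabla_h y^h = Id + h^{\alpha-1} G^h$. Since $(G^h)$ is bounded in $L^2(\Omega,\mthree)$, the last term is $O(h^{\alpha-1})$ in $L^1$, so the weak limit of $\sym G^h$ is entirely controlled by the intrinsic strain $(\nabla_h y^h)^T \nabla_h y^h - Id$. I would then identify the three natural blocks of the symmetric matrix $\sym G$ separately.

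For the transverse block $(k,j \in \{2,3\})$, I would construct $\beta$ as the weak $L^2$-limit of a rescaled transverse displacement. Set
$$
\tilde\beta^h(x):= \frac{1}{h^{\alpha-1}}\Bigl(y^h(x) - R^h(x_1)\,(x_1, hx_2, hx_3)^T\Bigr),
$$
subtract its cross-sectional averages to normalize, and invoke the rigidity bound $\|\nabla_h y^h - R^h\|_{L^2} \leq C h^{\alpha-1}$ from Theorem~\ref{rot} to obtain that $\partial_k \tilde\beta^h$ is bounded in $L^2(\Omega,\R^3)$ for $k=2,3$. A subsequence converges weakly to some $\beta$ with the regularity stated in the lemma. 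Passing to the limit in $2h^{\alpha-1}(\sym G^h)_{kj} = (\partial_k y^h/h)\cdot(\partial_j y^h/h) - \delta_{kj}$ identifies these entries with $\sym(\partial_k\beta,\partial_j\beta)$, as required. For the off-diagonal entries $(1,k)$, $k=2,3$, I would expand $\partial_1 y^h\cdot(\partial_k y^h/h)$ using $\nabla_h y^h = R^h + h^{\alpha-1} R^h G^h$ and $R^h = Id + h^{\alpha-2} A^h$, then integrate by parts once in $x_1$ against any smooth test function so that $\partial_1 R^h = h^{\alpha-2}(A^h)'$ falls on the test function; combining $A^h \wto A$ in $W^{1,2}$ with the skew symmetry of $A$ produces the $x_k A' e_k$ contribution that completes the first column of $M(\beta)$.

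Finally, for the $(1,1)$ entry I would expand
$$
2h^{\alpha-1}(\sym G^h)_{11} = |\partial_1 y^h|^2 - 1 = \bigl((\partial_1 y^h_1)^2 - 1\bigr) + (\partial_1 y^h_2)^2 + (\partial_1 y^h_3)^2
$$
and pass to cross-sectional averages. Using the definitions (\ref{defuh})--(\ref{defvhk}) one has $\int_S \partial_1 y^h_1 \,dx_2 dx_3 = 1 + h^{\alpha-1}(u^h)'$ for $\alpha\geq 3$ (respectively $1 + h^{2(\alpha-2)}(u^h)'$ for $2<\alpha<3$) and $\int_S \partial_1 y^h_k \,dx_2 dx_3 = h^{\alpha-2}(v^h_k)'$. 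After squaring, averaging and dividing by $2h^{\alpha-1}$, the transverse term $\tfrac{1}{2}h^{\alpha-3}[((v^h_2)')^2 + ((v^h_3)')^2]$ is infinitesimal when $\alpha>3$, of order one when $\alpha=3$ (and converges strongly by (\ref{v})), and is absorbed into the different scaling of $u^h$ when $2<\alpha<3$, yielding the three cases in (\ref{charGG}); in the last regime one can only assert the existence of a weak limit $g\in L^2(0,L)$. The main obstacle is the transverse block: verifying that $\tilde\beta^h$ really has enough regularity in the cross-section to extract weak $L^2$ limits of $\partial_k\tilde\beta^h$, which requires combining the rigidity estimate with Poincar\'e's inequality on $S$ and controlling the error terms arising from the insertion of $R^h$.
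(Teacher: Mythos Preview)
Your overall architecture---extract a transverse displacement to define $\beta$, then identify the entries of $\sym G$---is the same as the paper's, but two concrete steps fail as written.

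\textbf{Scaling of $\tilde\beta^h$.} With your normalization $h^{-(\alpha-1)}$ one computes
\[
\partial_k\tilde\beta^h=\frac{1}{h^{\alpha-1}}\bigl(\partial_k y^h-hR^he_k\bigr)=h\,R^hG^he_k,
\]
which is bounded in $L^2$ but tends to $0$, so the limit $\beta$ carries no information. The paper divides by $h^\alpha$ (and does not subtract the $x_1R^he_1$ term), obtaining $\partial_k\gamma^h=R^hG^he_k\wto Ge_k$, which is what gives $\partial_k\beta=Ge_k$.

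\textbf{Identification of the first column.} This is the substantive gap. The structure $\sym M(\beta)$ contains, in the first column, the term $x_2A'e_2+x_3A'e_3$, and the $(1,1)$ entry must be shown to equal $-x_2v_2''-x_3v_3''+g_1$ for some $g_1=g_1(x_1)$; it is not enough to compute the cross-sectional average $\overline{G}_{11}$. Your route through the nonlinear strain cannot produce this: expanding $\partial_1y^h\cdot(\partial_ky^h/h)$ with $\nablah y^h=R^h(Id+h^{\alpha-1}G^h)$ simply returns $h^{\alpha-1}(G^h_{k1}+G^h_{1k})+O(h^{2(\alpha-1)})$, i.e.\ $(\sym G^h)_{1k}$ again, and there is no total $x_1$-derivative to shift onto a test function---so the promised integration by parts has nothing to act on. In the paper the $A'$ term arises in a different way: one writes
\[
R^hG^he_1=h\,\partial_1\beta^h+\frac{1}{h^{\alpha-2}}\bigl(x_2(R^h)'e_2+x_3(R^h)'e_3\bigr)
-\int_S\frac{R^he_1-\partial_1y^h}{h^{\alpha-1}}\,dx_2dx_3,
\]
which comes from differentiating the ansatz $hx_kR^he_k$ in $x_1$. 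The first term vanishes in $W^{-1,2}$, the second converges to $x_2A'e_2+x_3A'e_3$ by \eqref{ah}, and the third is a function of $x_1$ only whose first component is then identified (for $\alpha\geq3$) via \eqref{syma} and the definition of $u^h$. This computation of $Ge_1$ is what you are missing; once it is in hand, the nonlinear strain identity becomes unnecessary, since the full matrix $G$ is known and $\sym G$ follows directly.
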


\begin{proof}
{}For every $h>0$ we consider the function $\gamma^h:\Omega\to\R^3$ defined
by
$$
\gamma^h(x):=\frac{1}{h^{\alpha}}[y^h(x)-hx_2R^h(x_1)e_2-hx_3R^h(x_1)e_3]
$$
for every $x\in\Omega$.
By \eqref{linfnorm} we have that 
\begin{equation}\label{derbetak}
\partial_k \gamma^h\deb Ge_k \quad \textrm{ for every }k=2,3. 
\end{equation}
Therefore, if we define $\beta^h:=\gamma^h-\lin\gamma^h$, where 
$\lin\gamma^h$ is the average of $\gamma^h$ on $S$, we deduce by Poincar\'e-Wirtinger inequality that $\beta^h$ is uniformly bounded in $L^2(\Omega, \R^3)$.
It follows that there exists $\tilde\beta\in L^2(\Omega, \R^3)$, with zero average on $S$, such that, up to subsequences, $\beta^h\deb \tilde \beta$ 
in $L^2(\Omega, \R^3)$. Furthermore, by \eqref{derbetak} we have
that $\partial_k \tilde \beta=Ge_k$ for all $k=2,3$.

As for the first column of $G$, we remark that by \eqref{ipsimm}
we can write
\begin{align}
\nonumber
R^hG^he_1&=h\partial_1\gamma^h+\frac{1}{h^{
\alpha-2}}(x_2(R^h)'e_2+x_3(R^h)'e_3) -\frac{1}{h^{\alpha-1}}R^he_1
\\
\label{rhghe1}
&= h\partial_1\beta^h+\frac{1}{h^{
\alpha-2}}(x_2(R^h)'e_2+x_3(R^h)'e_3)-\int_{S}{\frac{R^he_1-\partial_1 y^h}{h^{\alpha-1}}\, dx_2dx_3}.
\end{align}
By \eqref{linfnorm} we have that $R^hG^he_1\deb Ge_1$ weakly in
$L^2(\Omega,\mthree)$. Moreover, by \eqref{c2} there exists a function
$g\in L^2((0,L),\R^3)$ such that 
$$
\int_{S}{\frac{R^he_1-\partial_1 y^h}{h^{\alpha-1}}\,dx_2dx_3}\deb g \quad
\textrm{ weakly in }L^2((0,L), \R^3),
$$
while \eqref{ah} yields
$$
\frac{1}{h^{\alpha-2}}x_2(R^h)'e_2+x_3(R^h)'e_3
\deb x_2A'e_2+x_3A'e_3
\quad \textrm{ weakly in }L^2((0,L),\mthree).
$$
Finally, by the weak convergence of $(\beta^h)$ in $L^2(\Omega, \R^3)$
we have that $h\partial_1\beta^h\to0$ in $W^{-1,2}(\Omega, \R^3)$; thus,
passing to the limit in \eqref{rhghe1} we conclude that
$$
G=\Big(x_2A'e_2+x_3A'e_3+g \,\Big| \, \partial_2\tilde\beta
\, \Big| \, \partial_3\tilde\beta\Big).
$$
To obtain \eqref{charGG} it is now enough to define
$$
\beta:=\tilde \beta+x_2(g{\,\cdot\,} e_2)e_1+x_3(g{\,\cdot\,} e_3)e_1,
$$
so that
$$
\sym\,G=\sym\Big(x_2A'e_2+x_3A'e_3+(g{\,\cdot\,} e_1)e_1\,
\Big| \, \partial_2\beta \, \Big| \,\partial_3\beta\Big).
$$
This concludes the proof for $2<\alpha<3$.
For $\alpha\geq3$ a characterization of $g$ can be given. Indeed, one can observe that 
\begin{eqnarray*} 
\myintegra{\frac{\partial_1
y^h-R^he_1}{h^{\alpha-1}}{\,\cdot\,} e_1}&=&\myintegra{\frac{(\partial_1
y^h-1)+(1-R^h_{11})}{h^{\alpha-1}}}\\
&=&(u^h)'-h^{\alpha-3}\sym(R^h-Id)_{11},
\end{eqnarray*}
where $(u^h)$ is the sequence introduced in \eqref{defuh}. By \eqref{u} and
\eqref{syma} we obtain the thesis for $\alpha\geq 3$.
\end{proof}
\bigskip
\bigskip

\noindent
\textbf{Acknowledgements.}
This work is part of the project ``Metodi e modelli variazio\-nali in scienza dei materiali" 2010,
supported by GNAMPA.
\bigskip

\end{document}